\newcommand{\R}{{ \mathbb  R  }}
\newcommand{\C}{  \mathbb  C }
\newcommand{\Z}{  \mathbb Z }
\newcommand{\N}{  \mathbb N }
\newcommand{\B}{  \mathcal B }
\newcommand{\ov}{\bar}
\renewcommand{\l}{\left\langle}
\renewcommand{\r}{\right\rangle}
\renewcommand{\Re}{\mbox{Re\,}}
\newtheorem{Thm}{Theorem}[section]
\newtheorem{Lemma}[Thm]{Lemma}
\newtheorem{Cor}[Thm]{Corollary}
\newtheorem{Prop}[Thm]{Proposition}
 \newcommand{\dsize}{\displaystyle}
 \newcommand{\cal}{\mathcal}
\numberwithin{equation}{section}
\newtheorem{Rem}{Remark}[section]
\begin{document}
\title[Riesz bases by replacement]{{\bf Riesz bases from orthonormal bases by replacement }}

\author {Laura De Carli}
\address{Department of Mathematics and statistics, Florida international University, Miami, FL 33199}
\email{decarlil@fiu.edu}

\author {Julian Edward}
\address{Department of Mathematics and statistics, Florida international University, Miami, FL 33199}
\email{edwardj@fiu.edu}

\subjclass[2010]{Primary: 46C15; 
Secondary: 42C05,  	
 42C30}
\keywords{Hilbert spaces, Riesz bases, frame constants, exponential bases. }
%

\begin{abstract}{
Given an orthonormal basis
  $  {\cal V}=
  \{v_j\} _{j\in\N}$ in a separable Hilbert space $H$
and a set of unit vectors
 $\B=\{w_j\}_{j\in\N}$,  we consider the sets $\B_N$ obtained by replacing the vectors $v_1, ...,\, v_N$ with vectors $w_1,\, ...,\, w_N$.   We show necessary and sufficient conditions that ensure that the sets $\B_N$ are Riesz bases of $H$ and we estimate the frame constants of the $\B_N$.   Then, we prove  conditions that ensure that $\B$ is a Riesz basis.    Applications to the construction of exponential bases on  domains of $\R^d$ are  also presented.}

\end{abstract}

\maketitle

 \bigskip
 \section {Introduction}
 Let $H$  be a separable Hilbert space  with inner product $\l\, , \,\r$ and norm $||v ||=\sqrt{\l v,\, v\r}$. Let
  $  {\cal V}=
  \{v_j\} _{j\in\N}$ be an orthonormal  basis on $H$.
  Let
 $\B=\{w_j\}_{j\in\N}$ be a set of unit vectors in $H$.
For every $N\ge 1$, we let
\begin{equation}\label{def-BN}\B_N=\{w_j\}_{1\leq j\leq N} \cup \{v_j\}_{j\ge N+ 1}.\end{equation}
That is, we replace the first $N$ vectors of  the  orthonormal  basis ${\cal V}$ with  the first $N$ vectors  of $\B$. We ask
 the following questions.

  \begin{enumerate}
 \item [a)]     Which assumptions on the $w_j$ can be made so that the $\B_N$'s   are Riesz bases of $H$?

 \item [b)] Can   the
 frame  bounds  of $\B_N$   be  explicitly computed?

 \item [c)] If the   $\B_N$ are    Riesz bases of $H$   with frame constants $A_N \leq B_N$ that satisfy  $0<\alpha \leq A_N\leq B_N\leq \beta<\infty$,  with $\alpha,\beta $ independent of $N$,
 is  $\B $  a Riesz basis of $H$?
\item[d)] If $\B$  and   the $\B_N$ are   Riesz bases of $H$, how are the frame constants of these bases related?

\end{enumerate}

We have recalled the definition of frame and Riesz basis  and other relevant preliminary   in   Appendix 1. In view of  \eqref{e2-frame} and \eqref{e2- Riesz-sequence},   the sets $\B_N$   are Riesz bases  of $H$ if there exists constants
$A_N ,\ B_N >0$ for which
\begin{enumerate}
\item   $ \dsize A_N  \leq \sum_{j=1 }^N|\l f, w_j\r|^2 + \sum_{j=N+1}^\infty|\l f, v_j\r|^2\leq B_N
$ for every $f\in H$  such that $||f||  =1$,  and
\item
$
\dsize A_N \leq
\Big\Vert \sum_{j=1 }^N a_j w_j +\sum_{j=N+1}^m b_jv_j\Big\Vert^2 \leq B_N
$ for every finite set of constants $\{a_1$, ... $a_N$, $b_{N+1}$, ... $b_m\}\subset\C$  for which $\sum_{j=1 }^N |a_j|^2 + \sum_{j=N} ^m|b_j|^2=1$.
\end{enumerate}

 \medskip
 The  inequalities on the  right-hand side of (1)   and (2) are easy to prove. Consider for example the second inequality in  (1). Since the $v_j$'s form an orthonormal basis,  we have that
 $
 \sum_{j=1 }^\infty|\l f, v_j\r|^2=||f||^2.
 $
By Cauchy-Schwartz inequality, $|\l f, w_j\r|^2\leq ||f||^2$ and so we have $$
\sum_{j=1 }^N|\l f, w_j\r|^2 + \sum_{j=N}^\infty|\l f, v_j\r|^2 \leq ||f||^2(1+ N).
$$
The proof of the right inequality in (2) is similar.
The   left inequalities in (1) and (2) are  not valid for every family of  $w_j$'s  and are much more difficult to prove.

In the following we will denote with $\B_0$ the set ${\cal V}=\{v_j\}_{j\in\N}$ and with  $\B_\infty$ the set $\B=\{w_j\}_{j\in\N}$. We will denote the optimal frame constants  of $\B_N$   with $A_N$ and $B_N$.  Clearly, $A_0=B_0=1$.

 \medskip
 To state our   main results we need to introduce some notation: for every $N\ge 1$, we let  $H_N'= \mbox{span}\{v_1, ...,\, v_N\}$ and $H_N''=  \mbox{span}\{v_{j}  \}_{j\ge N+1}$; note that $H''_N$ is the orthogonal complement of $H'_N$ and so  $H=H_N'\oplus H''_N$.

For every $u\in H$, we  denote with  $p_N'(u)$ and  $p_N''(u)$   the orthogonal projection of $u $ over $H'_N$  and $H''_N$. When there is no ambiguity, we will simply write $u'$ and $u''$ instead of $p_N'(u)$  and $p_N''(u)$.
The following theorem answers  the first of our questions.

 \begin{Thm}\label{T-2main}
 The following are equivalent:
 \begin{enumerate}
 \item[a)]
 $\B_N$ is a frame of $H$.
 \item [b)] The set $ \{p'_N(w_1),\, ...,\, p'_N(w_N) \}$ span $H'_N$.
  \item [c)] The set  $\{p'_N(w_1),\, ...,\, p'_N(w_N\}$ is linearly independent.
  \item[d)] $\B_N$ is a Riesz sequence  of $H$.
  \item[e)] $\B_N$ is a Riesz basis of $H$.
  \end{enumerate}

     \end{Thm}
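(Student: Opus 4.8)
The plan is to make condition (c) the central hub and route all equivalences through it. The cheapest step is (b) $\Leftrightarrow$ (c): since $H_N'$ has dimension exactly $N$ and we are handed exactly $N$ vectors $p_N'(w_1),\dots,p_N'(w_N)$, in a finite-dimensional space these span $H_N'$ if and only if they are linearly independent, so no work beyond this elementary observation is needed.

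The heart of the matter is (c) $\Rightarrow$ (e), which I would prove by exhibiting a bounded invertible operator carrying $\cal V$ onto $\B_N$. Define $T\colon H\to H$ on the orthonormal basis by $Tv_j=w_j$ for $1\le j\le N$ and $Tv_j=v_j$ for $j\ge N+1$. Then $T=I+K$, where $K$ has finite rank (it is supported on $H_N'$), so $T$ is bounded and, as a finite-rank perturbation of the identity, is Fredholm of index zero; hence $T$ is invertible as soon as it is injective. To verify injectivity I take $u=\sum_j c_jv_j$ with $Tu=0$ and apply $p_N'$: since $p_N'(v_j)=0$ for $j\ge N+1$, this collapses to $\sum_{j=1}^N c_j\,p_N'(w_j)=0$, forcing $c_1=\dots=c_N=0$ by (c), after which $\sum_{j>N}c_jv_j=0$ makes the remaining coefficients vanish. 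Thus $T$ is invertible, and since $\B_N=\{Tv_j\}$ is the image of an orthonormal basis under a bounded invertible operator, it is a Riesz basis.

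Next I would settle the frame condition via (a) $\Leftrightarrow$ (b). For (a) $\Rightarrow$ (b) I argue contrapositively: if $\{p_N'(w_j)\}$ fails to span $H_N'$, choose a unit vector $g\in H_N'$ orthogonal to every $p_N'(w_j)$; as $g\perp H_N''$ we get $\langle g,w_j\rangle=\langle g,p_N'(w_j)\rangle=0$ and $\langle g,v_j\rangle=0$ for $j\ge N+1$, so the frame sum vanishes at $g$ and the lower bound cannot hold. For (b) $\Rightarrow$ (a) only the lower bound is in question, the upper bound being already recorded in the excerpt. Suppose it fails, giving $f_n$ with $\|f_n\|=1$ and frame sum tending to $0$; then $\sum_{j>N}|\langle f_n,v_j\rangle|^2=\|p_N''(f_n)\|^2\to0$, so $\|p_N'(f_n)\|\to1$, while each $\langle f_n,w_j\rangle\to0$ forces $\langle p_N'(f_n),p_N'(w_j)\rangle\to0$. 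Extracting a convergent subsequence of $p_N'(f_n)$ in the finite-dimensional $H_N'$ produces a unit vector annihilated by all the $p_N'(w_j)$, contradicting (b).

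Finally I would fold in (d). The implication (e) $\Rightarrow$ (d) is immediate, a Riesz basis being in particular a Riesz sequence. For (d) $\Rightarrow$ (c) I use that the lower Riesz bound makes $\B_N$ an $\ell^2$-independent family (the bound passes from finite to $\ell^2$ coefficients); then a nontrivial relation $\sum_{j=1}^N c_j\,p_N'(w_j)=0$ would let me choose square-summable coefficients $c_j$ $(j>N)$ with $\sum_{j>N}c_jv_j=-\sum_{j=1}^N c_j\,p_N''(w_j)$, yielding a nontrivial $\ell^2$ dependence $\sum_{j=1}^N c_jw_j+\sum_{j>N}c_jv_j=0$ among elements of $\B_N$, a contradiction. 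Assembling these links, (a)--(e) are all equivalent. I expect the main obstacle to be the invertibility step inside (c) $\Rightarrow$ (e): upgrading injectivity to invertibility is precisely where the finite-rank (compact) perturbation structure is indispensable, with the finite-dimensional compactness argument behind (b) $\Rightarrow$ (a) being the other delicate point.
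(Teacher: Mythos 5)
Your argument is correct, and while its periphery matches the paper, the central implication travels a genuinely different road. The paper decomposes the theorem into two stand-alone characterizations: $\B_N$ is a Riesz sequence iff $W_N'$ is linearly independent (proved by a direct norm computation in one direction and by constructing a representation of every $f\in H$ in the other), and $\B_N$ is a frame iff $W_N'$ spans $H_N'$ (proved by restricting the frame functional to the compact set $S_H\cap\hat H_N$ and invoking Weierstrass). You instead close the cycle (c)$\Rightarrow$(e)$\Rightarrow$(d)$\Rightarrow$(c), and your key step (c)$\Rightarrow$(e) goes through the operator $T=I+K$ with $K$ of finite rank: Fredholmness of index zero reduces invertibility to injectivity, injectivity is exactly condition (c) after projecting by $p_N'$, and $\B_N=T({\cal V})$ is then a Riesz basis by the standard characterization as the image of an orthonormal basis under a bounded invertible operator. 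This is cleaner and, if anything, more airtight than the paper's representation argument for the same implication, which asserts rather than verifies the required norm equivalence; the price is importing the Fredholm alternative and the operator-image characterization of Riesz bases, both standard but external to the paper's toolkit. Your (a)$\Leftrightarrow$(b) is essentially the paper's argument --- the same orthogonal witness $\psi$ for necessity, and sequential compactness of the unit sphere of the finite-dimensional $H_N'$ in place of Weierstrass for sufficiency --- and your (d)$\Rightarrow$(c) trades the paper's quantitative estimate $\Vert\sum_{j=1}^N c_jw_j'\Vert^2\ge A_N\sum_{j=1}^N|c_j|^2$ for a qualitative $\ell^2$-dependence contradiction; both are sound, with the paper's version having the advantage of producing an explicit lower bound.
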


Theorem \ref{T-2main}   shows that $\B_N$  is a  Riesz basis of $H$ if and only if $W'_N$ is a Riesz  basis of $H'_N  $. In infinite dimensional Hilbert spaces, it is not common to  have conditions that  ensure that a set   is a  Riesz sequence (or a frame)  if and only if  it is a Riesz basis.  See \cite{Dc}, \cite{DMT}.

\medskip
  We denote by $W'_N$ the set $\{p'_N(w_1),\, ...,\, p'_N(w_N) \}$ and with   $\tilde B_N$ the set  $ \{p'_N(w_1),\, ...,\, p'_N(w_N),\, v_{N+1}, \, v_{N+2}, ...\}$. We denote by  $ \dsize  U_N'$   the  $N\times N$ matrix whose elements are $ u_{i,j}=\l p'_N( w _i),\, p'_N (w _j)\r$ for $1\leq i,j\leq N$. Similarly, we define $ \dsize  U_N''$   the  infinite matrix whose elements are $ u_{i,j}=\l p''_N( w _i),\, p''_N (w _j)\r$ for $1\leq i,j\leq N$.
In view of Theorem \ref{T-2main}, $\tilde B_N$ is a Riesz basis of $H$ if and only if $W'_N$ is a basis of $H'_N$ and   if and only if $U'_N$ is nonsingular.
  We denote by $\tilde A_N$ and $\tilde B_N$ the frame constants of $\tilde \B_N$.

   If $  \vec v=(v_1, ...,\, v_n)\in \C^n$, we let $| \vec v\,|=\sqrt{|v_1|^2+... +|v_n|^2}$ be the Euclidean norm in $\C^n$. 
If there is no risk of confusion,  we will  also denote the norm of $\vec y\in \ell^2$ with $|\vec y\, |$.
   We prove the following

  \begin{Thm}\label{T-3main}

Assume that $W_N'$ is a basis of  $H_N'$.
Then,
 $  \B_N$ is a Riesz basis of $H$ with frame constants
$$
   B_N=\max_{\vec c\in\C^N:  |\vec c\,|\leq 1 }   \Big\{ \l  U_N'\, \vec c,\     \vec c \,    \r +\left(
   \sqrt{1- |\vec c\,|}+\sqrt{ \l  U_N''\, \vec c,\     \vec c\,\r }\right)^2\Big\},$$$$
A_N=\min_{\vec c\in\C^N:   |\vec c\,|\leq 1}   \Big\{ \l  U_N' \, \vec c,\     \vec c \,    \r +\left(
   \sqrt{1- |\vec c\,|}-\sqrt{ \l  U_N''\, \vec c,\     \vec c\,\r }\right)^2\Big\}.
 $$
 Furthermore, letting   $ \Lambda_N$, resp. $ \lambda_N$, be the maximum, resp.   minimum, eigenvalue of the matrix $ U_N'$, we have
\begin{equation}\label{e-main-eigen}
   B_N\ge\!  \tilde B_N\!=\!\max\{\Lambda_N,\, 1\};\     \lambda_N=\tilde A_N \!  \leq \max\{A_N, \tilde A_N\}\!\leq \min_{1\leq j\leq N} ||p_N'(w_j)||^2.
\end{equation}
\end{Thm}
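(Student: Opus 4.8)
The plan is to read off the frame constants of $\B_N$ from the two characterizations (1) and (2) of the introduction, exploiting the orthogonal splitting $H=H_N'\oplus H_N''$. Since $W_N'$ is a basis of $H_N'$ the matrix $U_N'$ is nonsingular, so by Theorem \ref{T-2main} the family $\B_N$ is a Riesz basis and its optimal bounds $A_N\le B_N$ are well defined; for a Riesz basis these are exactly the bounds appearing in (1) and in (2), so I am free to use whichever is more convenient. Throughout I will use the two identities $\l U_N'\,\vec c,\,\vec c\r=\big\|\sum_{j=1}^N c_j\,p_N'(w_j)\big\|^2$ and $\l U_N''\,\vec c,\,\vec c\r=\big\|\sum_{j=1}^N c_j\,p_N''(w_j)\big\|^2$, which hold because $U_N'$ and $U_N''$ are the Gram matrices of $\{p_N'(w_j)\}$ and $\{p_N''(w_j)\}$.

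To obtain the formulas for $A_N$ and $B_N$ I would use characterization (2). Fix coefficients with $\sum_{j=1}^N|a_j|^2+\sum_{k>N}|b_k|^2=1$ and set $\vec c=(a_1,\dots,a_N)$. Splitting $w_j=p_N'(w_j)+p_N''(w_j)$ puts $\sum_j a_j p_N'(w_j)$ in $H_N'$ and the vector $z+y$, with $z=\sum_j a_j p_N''(w_j)$ and $y=\sum_{k>N}b_k v_k$, in $H_N''$; by the Pythagorean theorem the squared norm of $\sum_j a_j w_j+\sum_k b_k v_k$ equals $\l U_N'\,\vec c,\,\vec c\r+\|z+y\|^2$. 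For $\vec c$ fixed, $\|z\|^2=\l U_N''\,\vec c,\,\vec c\r$ is fixed and, since $\{v_k\}_{k>N}$ is an orthonormal basis of $H_N''$, the vector $y$ ranges densely over all of $H_N''$ subject only to $\|y\|^2=1-|\vec c\,|^2$. As $z\in H_N''$, the Cauchy--Schwarz inequality gives $\big(\sqrt{1-|\vec c\,|^2}-\sqrt{\l U_N''\vec c,\vec c\r}\big)^2\le\|z+y\|^2\le\big(\sqrt{1-|\vec c\,|^2}+\sqrt{\l U_N''\vec c,\vec c\r}\big)^2$, both extremes being approached by taking $y$ parallel to $\mp z$. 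Maximizing, resp. minimizing, over $\vec c$ in the compact ball $|\vec c\,|\le1$ (the constraint being forced by $\|y\|^2\ge0$) then yields the stated expressions for $B_N$ and $A_N$, the outer extrema being attained by continuity on the closed ball.

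For the bounds in \eqref{e-main-eigen} I would pass to the frame operator. Writing $A\vec c=\sum_{j=1}^N c_j\,p_N'(w_j)$ for the synthesis map $\C^N\to H_N'$, the frame operator $S'=\sum_{j=1}^N \l\,\cdot\,,p_N'(w_j)\r p_N'(w_j)=AA^*$ of $W_N'$ and the Gram matrix $U_N'=A^*A$ share the same eigenvalues (both $N\times N$ and, since $W_N'$ is a basis, invertible), namely $\lambda_N=\lambda_{\min}$ and $\Lambda_N=\lambda_{\max}$. Because $\{p_N'(w_j)\}\subset H_N'$ and $\{v_k\}_{k>N}\subset H_N''$ are orthogonal families, the frame operator of $\tilde\B_N$ is the block sum $S'\oplus I_{H_N''}$, whose spectrum is that of $U_N'$ together with $\{1\}$; hence $\tilde B_N=\max\{\Lambda_N,1\}$ and $\tilde A_N=\min\{\lambda_N,1\}$. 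The trace bound $\mathrm{tr}\,U_N'=\sum_{j=1}^N\|p_N'(w_j)\|^2\le N$, which holds since $\|p_N'(w_j)\|\le\|w_j\|=1$, forces $\lambda_N\le1$, so in fact $\tilde A_N=\lambda_N$.

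It remains to compare with $A_N,B_N$. For $B_N\ge\tilde B_N$ I would evaluate the objective defining $B_N$ at a unit eigenvector $\vec c$ of $U_N'$ for $\Lambda_N$, obtaining $\Lambda_N+\l U_N''\vec c,\vec c\r\ge\Lambda_N$, and at $\vec c=0$, obtaining $1$; since $U_N''\ge0$ this gives $B_N\ge\max\{\Lambda_N,1\}=\tilde B_N$. For the lower estimates, the smallest eigenvalue of the positive matrix $U_N'$ is at most its smallest diagonal entry, so $\tilde A_N=\lambda_N\le\min_{j}\|p_N'(w_j)\|^2$; and restricting the frame inequality (1) to unit vectors $f\in H_N'$, where $p_N''(f)=0$ and $\l f,w_j\r=\l f,p_N'(w_j)\r$, gives $A_N\le\inf_{f\in H_N',\,\|f\|=1}\sum_{j=1}^N|\l f,p_N'(w_j)\r|^2=\lambda_N$. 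Combining the last two displays yields $\max\{A_N,\tilde A_N\}\le\min_j\|p_N'(w_j)\|^2$. I expect the main obstacle to be the inner optimization of the second paragraph: one must justify rigorously that finite tails $\sum_{k>N}b_k v_k$ approximate every $y\in H_N''$ of prescribed norm, so that the two square terms are genuinely the infimum and supremum of $\|z+y\|^2$, and that passing from finite combinations to the closed ball $|\vec c\,|\le1$ leaves the optimal constants unchanged; the spectral and trace identities are then routine.
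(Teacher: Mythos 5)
Your proof is correct and follows essentially the same route as the paper: the same splitting $H=H_N'\oplus H_N''$, the same Gram-matrix identities $\l U_N'\vec c,\vec c\,\r=||\sum_j c_j p_N'(w_j)||^2$ and $\l U_N''\vec c,\vec c\,\r=||\sum_j c_j p_N''(w_j)||^2$, and the same inner optimization over the $H_N''$-component for fixed $\vec c$ via Cauchy--Schwarz (you phrase it with the vectors $z,y\in H_N''$ where the paper works with the coordinate sequence $\vec\rho(\vec c)$ in $\ell^2$, and you obtain $\lambda_N\le 1$ from the trace of $U_N'$ where the paper evaluates the quadratic form at coordinate vectors to get $\tilde A_N\le\min_j||p_N'(w_j)||^2\le 1$). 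As a bonus, you explicitly prove the comparison inequalities $B_N\ge\tilde B_N$ and $A_N\le\lambda_N\le\min_j||p_N'(w_j)||^2$, which the paper defers to a remark that does not actually appear in the text, and you correctly use $1-|\vec c\,|^2$ where the theorem statement contains the typo $1-|\vec c\,|$.
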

 We note in passing that the matrix entries of $U_N'$ can be computed using the formula 
$$ u_{i,j}=\l p'_N( w _i),\, p'_N (w _j)\r =\sum_{k=1}^N\l w_i,v_k\r \overline{\l w_j,v_k\r}.$$

Theorem \ref{T-3main} answers question b).  Our next theorem answers questions c) and d).

\begin{Thm}\label{T-first-main} Assume that the $\B_N$'s are Riesz bases  of $H$ for every $N<\infty$.

a) If   $   \liminf_{N\to\infty }A_N> 0$ and $ \dsize \limsup_{N\to\infty }B_N< \infty $,  then $ B_\infty$ is a Riesz basis of $H$ with frame constants $A_\infty$, $B_\infty$ that satisfy
\begin{equation}\label{e-ineq-const} A_\infty\ge \liminf_{N\to\infty }A_N, \quad B_\infty\leq \limsup_{N\to\infty }B_N.\end{equation}

b)  if $\B_\infty$ is a Riesz basis of $H$ with frame constants $A_\infty$, $B_\infty$, then  the inequality \eqref{e-ineq-const} holds.
 \end{Thm}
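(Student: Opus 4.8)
The plan is to verify the two defining inequalities (1) and (2) directly for $\B_\infty=\{w_j\}_{j\in\N}$. The one device that makes everything work is that, since $\{v_j\}$ is an orthonormal basis, Parseval's identity gives $\sum_{j=1}^\infty|\l f,v_j\r|^2=\|f\|^2$, so for every fixed $f$ the tail $\ep_N(f):=\sum_{j\ge N+1}|\l f,v_j\r|^2$ tends to $0$ as $N\to\infty$. This tail is precisely the bridge connecting the inequalities for $\B_N$ to those for $\B_\infty$.

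First I would handle the frame inequality. Fix $f$ with $\|f\|=1$ and write $S_N(f)=\sum_{j=1}^N|\l f,w_j\r|^2$, an increasing sequence with limit $S(f)=\sum_{j=1}^\infty|\l f,w_j\r|^2\in[0,\infty]$. Condition (1) for $\B_N$ says exactly that $A_N\le S_N(f)+\ep_N(f)\le B_N$. Since $S_N(f)+\ep_N(f)\to S(f)$, letting $N\to\infty$ yields $\limsup_N A_N\le S(f)\le\liminf_N B_N$, and in particular $\liminf_N A_N\le S(f)\le\limsup_N B_N$. Under the hypotheses of a), $\alpha:=\liminf_N A_N>0$ and $\beta:=\limsup_N B_N<\infty$, so $\alpha\le S(f)\le\beta$ for every unit $f$ and $\B_\infty$ is a frame with bounds $\alpha,\beta$.

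Next I would handle the Riesz sequence inequality, which is not automatic from the frame inequality and must be argued separately. Given finitely many scalars $a_1,\dots,a_m$ and any $N\ge m$, apply condition (2) for $\B_N$ with $a_{m+1}=\dots=a_N=0$ and all $b_j=0$; this gives $A_N\sum_{j=1}^m|a_j|^2\le\big\|\sum_{j=1}^m a_jw_j\big\|^2\le B_N\sum_{j=1}^m|a_j|^2$. The middle quantity is independent of $N$, so taking $\sup_{N\ge m}$ in the left bound and $\inf_{N\ge m}$ in the right bound, and using $\sup_{N\ge m}A_N\ge\alpha$ and $\inf_{N\ge m}B_N\le\beta$, produces $\alpha\sum|a_j|^2\le\big\|\sum a_jw_j\big\|^2\le\beta\sum|a_j|^2$. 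Now both (1) and (2) hold for $\B_\infty$ with the uniform constants $\alpha,\beta$, so by the characterization recalled in the Introduction (a frame that is also a Riesz sequence is a Riesz basis) $\B_\infty$ is a Riesz basis; since $A_\infty,B_\infty$ are its optimal constants this gives $A_\infty\ge\alpha=\liminf_N A_N$ and $B_\infty\le\beta=\limsup_N B_N$, i.e.\ \eqref{e-ineq-const}.

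For part b) I would reuse only the frame estimate of the second paragraph. If $\B_\infty$ is a Riesz basis then $0<A_\infty\le B_\infty<\infty$, $S(f)$ is finite, and $A_\infty=\inf_{\|f\|=1}S(f)$, $B_\infty=\sup_{\|f\|=1}S(f)$; the chain $\liminf_N A_N\le S(f)\le\limsup_N B_N$ then gives \eqref{e-ineq-const} at once, with no boundedness assumption on the $A_N,B_N$ needed. The step I expect to be the main obstacle is not any individual estimate but the conceptual point that a frame need not be a Riesz basis, so the Riesz sequence inequality (2) genuinely has to be checked; the accompanying technical care is the interchange of the limit $N\to\infty$ with the infinite sum defining $S(f)$, which is valid exactly because $\ep_N(f)\to0$ --- the only place the orthonormal-basis hypothesis on $\{v_j\}$ is used. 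I note in passing that this argument in fact yields the sharper bounds $A_\infty\ge\limsup_N A_N$ and $B_\infty\le\liminf_N B_N$, of which \eqref{e-ineq-const} is a weakening.
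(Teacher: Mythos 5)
Your proof is correct, and it follows the same skeleton as the paper's (the Riesz-sequence step via $A_N\sum|a_j|^2\le\|\sum a_jw_j\|^2\le B_N\sum|a_j|^2$ for $N\ge m$ is identical, and the frame step in both arguments ultimately rests on $\|T_Nf\|_{\ell^2}^2=S_N(f)+\ep_N(f)\to S(f)$), but the execution differs in two places worth noting. For the frame bounds, the paper first invokes \cite[Thm.\ 7.4]{Heil} to get the Bessel bound from the Riesz-sequence upper bound, and then proves $\|T_\infty f-T_Nf\|_{\ell^2}\to 0$ using that Bessel bound to control the tail $\sum_{k>N}|\l f,w_k\r|^2$; your squeeze $A_N\le S_N(f)+\ep_N(f)\le B_N$ combined with monotone convergence of $S_N(f)$ and $\ep_N(f)\to 0$ delivers both frame bounds at once, with the Bessel property falling out as a byproduct rather than being a prerequisite --- this is more self-contained and slightly more elementary. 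For part b), the paper fixes $\epsilon$-near-optimizers $\bar f$ and a large cutoff $\bar N$ and does the bookkeeping; you instead establish the pointwise chain $\limsup_N A_N\le S(f)\le\liminf_N B_N$ for every unit $f$ and then take $\inf_f$ and $\sup_f$, which is cleaner and, as you observe, yields the strictly stronger conclusion $A_\infty\ge\limsup_N A_N$ and $B_\infty\le\liminf_N B_N$ (consistent with the paper's Remark~\ref{Rem-J-example}, where $A_\infty=1$ while $\liminf A_N=0$ but still $A_N\le 1$ for all $N$). One small point in your favor: the paper's line ``$A\le A_N\le\cdots$'' with $A=\liminf A_N$ is not literally valid for every $N$, whereas your passage through $\sup_{N\ge m}A_N\ge\liminf_N A_N$ is the correct way to extract the uniform Riesz-sequence constant.
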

We will also present an example proving that the first inequality in \eqref{e-ineq-const} can be strict.

We prove Theorems \ref{T-2main}, \ref{T-3main} and \ref{T-first-main} in Sections 2, 3 and 4. Preliminary results and definitions are contained in the  Appendix.
{In Section 5 we use  our results to prove sufficient conditions for the existence of exponential bases  in  $ L^2(D)$, where   $D\subset \R^d$ has finite Lebesgue measure $|D|$.    An {\it exponential  basis} of $L^2(D)$  is a Riesz basis in the form of  $E(\Lambda)=\{ e^{2\pi i \langle   \vec\lambda ,\,  x\rangle}\}_{ \vec\lambda\in\Lambda}$, where $\Lambda$ is a discrete set of $\R^d $.
Exponential bases  are important
to provide  unique and stable  representation of functions in $L^2(D)$  in terms of the functions in  $E(\Lambda)$,
 with  coefficients that are easy to calculate.
Unfortunately,  our understanding of exponential  bases is still very incomplete.
  There are very few examples of domains  in which it is known how to construct   exponential bases,
 and no example of domain  for which
 exponential bases are known  not to exist.  In particular, it is known that the disk in $\R^d$ does not have  orthogonal exponential bases (i.e. it is not {\it spectral}) but it is not known whether it  has   exponential Riesz bases or not.  The connection between tiling and spectral properties of   domains of $\R^d$  is deep and fascinating and has spur intense investigation since when B. Fuglede formulated his famous tiling $\iff$ spectral conjecture in \cite{F}. See  \cite{K} \cite{GL}, \cite{K2}  and  the   references
cited there.
 We plan to   to pursue further investigation on this problem in a subsequent paper.

  }

  \section{ Proof of Theorem \ref{T-2main}}

  Recall that    $H'_N =\mbox{span}\{v_1, ...,\, v_N\}$   and $H''_N$  is the orthogonal complement of $H'_N$.  We have denoted with $p'_N $  and $p''_N $  the orthogonal projection   over $H'_N$ and $H''_N$.
In this section we will  use the simpler notation  $u'$ and $u''$ instead of  $p'_N(u) $  and $p''_N(u) $.
 {Here and in the rest of  the paper we assume that $H$ is a Hilbert space on $\C$,  and   that the inner product in $H$ satisfies $\l au,\, bv\r =a\bar{b}\l u,\,v\r$ whenever $u,\, v\in H$ and  $a,b\in \C$}

  \medskip
  Theorem  \ref{T-2main} follows from Theorems \ref{T-riesz} and Theorem \ref{T-frame} below.

   \begin{Thm}\label{T-riesz}
 $\B_N$ is  a Riesz sequence of $H$  if and only if    the set  $W'_N=\{  w_1'  , ...,   w_N' \}$ is  linearly independent.
 \end{Thm}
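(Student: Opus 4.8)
The plan is to reduce everything to the orthogonal splitting $H=H_N'\oplus H_N''$ together with a Gram-matrix computation. For a finite combination $f=\sum_{j=1}^N a_j w_j+\sum_{j=N+1}^m b_j v_j$ I would write each $w_j=w_j'+w_j''$ and regroup, so that $\sum_{j=1}^N a_j w_j'\in H_N'$ while $\sum_{j=1}^N a_j w_j''+\sum_{j=N+1}^m b_j v_j\in H_N''$ (the $v_j$ with $j\ge N+1$ lie in $H_N''$). Since the two pieces are orthogonal, Pythagoras gives
$$\|f\|^2=\langle U_N'\,\vec a,\vec a\rangle+\Big\|\sum_{j=1}^N a_j w_j''+\sum_{j=N+1}^m b_j v_j\Big\|^2,$$
where $\vec a=(a_1,\dots,a_N)$ and $U_N'$ is the Gram matrix of $W_N'$. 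The upper Riesz bound is immediate as noted in the Introduction, so the entire content is the existence of a positive lower bound, and this identity is the tool for both implications.

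For the forward direction I would argue by contraposition. If $W_N'$ is linearly dependent, choose $\vec a\neq 0$ with $|\vec a|=1$ and $\sum_{j=1}^N a_j w_j'=0$. Then $g:=\sum_{j=1}^N a_j w_j''\in H_N''$ expands as $\sum_{j\ge N+1}c_j v_j$; truncating the tail, for any $\epsilon>0$ I pick $m$ with $\sum_{j>m}|c_j|^2<\epsilon$ and set $b_j=-c_j$ for $N+1\le j\le m$. By the identity the resulting $f$ satisfies $\|f\|^2=\big\|\sum_{j>m}c_j v_j\big\|^2<\epsilon$, while its coefficient vector has Euclidean norm-squared $1+\sum_{N+1}^m|c_j|^2\ge 1$. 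Normalizing the coefficient vector to the unit sphere leaves $\|f\|^2<\epsilon$, so no positive $A_N$ can exist and $\B_N$ is not a Riesz sequence.

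For the converse, linear independence of $W_N'$ makes $U_N'$ positive definite, whence its least eigenvalue $\lambda_N>0$ and $\langle U_N'\,\vec a,\vec a\rangle\ge\lambda_N|\vec a|^2$. To exclude $A_N=0$ I would suppose a sequence of unit coefficient vectors $(\vec a_n,\vec b_n)$, $|\vec a_n|^2+|\vec b_n|^2=1$, with $\|f_n\|\to 0$. The identity forces both $\langle U_N'\,\vec a_n,\vec a_n\rangle\to 0$ and the second summand $\to 0$; the first yields $|\vec a_n|\to 0$. Combining the bound $\big\|\sum_j a_j w_j''\big\|\le\sqrt{N}\,|\vec a|$ (from $\|w_j''\|\le 1$) with the reverse triangle inequality $\big\|\sum_j a_j w_j''+\sum_j b_j v_j\big\|\ge|\vec b|-\big\|\sum_j a_j w_j''\big\|$, I then obtain $|\vec b_n|\to 0$, contradicting the normalization. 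Hence $A_N>0$ and $\B_N$ is a Riesz sequence.

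The main obstacle is the lower bound in the converse, namely controlling the cross interaction between the $a_j$ (through the $H_N''$-parts $w_j''$) and the $b_j$. The orthogonal decomposition removes, for free, all interaction with $H_N'$, but inside $H_N''$ the vectors $w_j''$ and the $v_j$ genuinely overlap; it is the reverse-triangle estimate, fed by the decay $|\vec a_n|\to 0$ that comes from the positive definiteness of $U_N'$, that finally decouples the two blocks of coefficients.
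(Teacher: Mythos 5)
Your proof is correct, and although it rests on the same two ingredients as the paper's --- the orthogonal splitting $H=H_N'\oplus H_N''$ and the decomposition $w_j=w_j'+w_j''$ --- the execution differs in both directions, most substantially in the converse. For the forward implication the paper argues directly: it rewrites $\sum_j c_j w_j'$ as an (infinite) linear combination of elements of $\B_N$ and applies the lower Riesz inequality to get $\|\sum_j c_j w_j'\|^2\ge A_N\sum_j|c_j|^2$; you argue by contraposition with a truncation of the tail of $\sum_j a_j w_j''$, which has the small advantage of only ever invoking the Riesz inequality on finite coefficient sequences rather than on an $\ell^2$ one. For the converse the two proofs genuinely diverge: the paper shows that every $f\in H$ admits a unique expansion in the elements of $\B_N$ --- a qualitative argument which, strictly speaking, still needs the standard equivalence between bounded unconditional bases and Riesz bases (or an open-mapping argument) to produce the norm bounds --- whereas you prove the lower Riesz bound quantitatively: positive definiteness of the Gram matrix $U_N'$ forces $|\vec a_n|\to 0$, and then the reverse triangle inequality together with the crude estimate $\|\sum_j a_j w_j''\|\le\sqrt{N}\,|\vec a|$ forces $|\vec b_n|\to 0$, contradicting the normalization. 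Your version is more self-contained on this point, and your Pythagorean identity $\|f\|^2=\langle U_N'\,\vec a,\vec a\rangle+\|\sum_j a_j w_j''+\sum_j b_j v_j\|^2$ is essentially the starting point of the paper's later computation of the optimal frame constants in Theorem \ref{T-3main} (via Theorems \ref{T-const-tB} and \ref{T-const-B}), so your argument anticipates that section; what it does not produce, and the paper's converse does, is the explicit reconstruction formula expressing $f$ in terms of the elements of $\B_N$.
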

 \begin{Thm}\label{T-frame}
  $\B_N$ is  a frame  of $H$  if and only if    $W'_N$ spans $H'_N$.

 \end{Thm}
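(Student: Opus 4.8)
The plan is to reduce the frame inequality for $\B_N$ to a finite-dimensional nondegeneracy statement about $W'_N$ inside $H'_N$. Decompose each $f\in H$ as $f=f'+f''$ with $f'=p'_N(f)$ and $f''=p''_N(f)$. Because $v_j\in H''_N$ for $j\ge N+1$ and $\{v_j\}_{j\ge N+1}$ is an orthonormal basis of $H''_N$, we have $\sum_{j\ge N+1}|\l f,v_j\r|^2=||f''||^2$; and since $f'\perp w''_j$ and $f''\perp w'_j$, we have $\l f,w_j\r=\l f',w'_j\r+\l f'',w''_j\r$. Writing the frame sum as $S(f)$, this gives
\[
S(f)=\sum_{j=1}^N\big|\,\l f',w'_j\r+\l f'',w''_j\r\,\big|^2+||f''||^2 .
\]
The right-hand (upper) frame inequality was already established in the Introduction, so $\B_N$ is a frame if and only if $\dsize\inf_{||f||=1}S(f)>0$.

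For the \emph{only if} direction I argue by contrapositive. If $W'_N$ fails to span $H'_N$, then, since $\dim H'_N=N$, its span is a proper subspace of $H'_N$, so there is a unit vector $g\in H'_N$ orthogonal to every $w'_j$. Taking $f=g$ kills the $H''_N$-part ($g''=0$) and gives $\l g,w_j\r=\l g,w'_j\r=0$ for all $j$, whence $S(g)=0$ while $||g||=1$. No positive lower frame bound can then exist, so $\B_N$ is not a frame.

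For the \emph{if} direction, suppose $W'_N$ spans $H'_N$; as it consists of exactly $N=\dim H'_N$ vectors, this is equivalent to $W'_N$ being a basis of $H'_N$, so its Gram matrix $U'_N$ is positive definite with least eigenvalue $\lambda_N>0$, giving $\sum_{j=1}^N|\l f',w'_j\r|^2\ge\lambda_N||f'||^2$. The main obstacle is that $H$ is infinite-dimensional, so the pointwise positivity $S(f)>0$ for $f\neq0$ does not by itself yield a \emph{uniform} lower bound. The key point is that the directions along which $S$ can degenerate are confined to the finite-dimensional $H'_N$, where compactness is available. Concretely, suppose $\dsize\inf_{||f||=1}S(f)=0$ and choose $f_n$ with $||f_n||=1$ and $S(f_n)\to0$. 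From $S(f_n)\ge||f_n''||^2$ we get $f_n''\to0$, hence $||f_n'||\to1$ and $\l f_n'',w''_j\r\to0$, so $\sum_{j=1}^N|\l f_n',w'_j\r|^2\to0$. The vectors $f_n'$ lie in the finite-dimensional $H'_N$ and are bounded, so a subsequence converges to some $g\in H'_N$ with $||g||=1$; by continuity of the inner product $\l g,w'_j\r=0$ for every $j$, which contradicts the lower bound $\sum_j|\l g,w'_j\r|^2\ge\lambda_N||g||^2=\lambda_N>0$. Hence $\dsize\inf_{||f||=1}S(f)=A_N>0$ and $\B_N$ is a frame. The crux throughout is exactly this localization of the potential degeneracy to $H'_N$, which converts an infinite-dimensional frame question into a finite-dimensional spanning/nondegeneracy statement.
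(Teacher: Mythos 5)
Your proof is correct and follows essentially the same route as the paper's: both reduce the frame inequality for $\B_N$ to a nondegeneracy statement localized in a finite-dimensional subspace (your $H'_N$, the paper's $\hat H_N$ in Lemma~\ref{L-prelim-reduction}) and exploit compactness there, and your ``only if'' witness is the identical unit vector in $H'_N$ orthogonal to $W'_N$. The only cosmetic difference is that the paper extracts an exact minimizer on $S_H\cap \hat H_N$ via Weierstrass and then argues by contradiction, whereas you run a minimizing-sequence argument and quantify the spanning hypothesis through the smallest eigenvalue of the Gram matrix $U'_N$ --- an ingredient the paper only brings in later, in Theorem~\ref{T-const-tB}.
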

Since $H'_N$ is a $N$ dimensional subspace of $H$,  the set $W'_N$ is  linearly independent if and only if it spans $H'_N$.
 Thus, Theorems     \ref{T-riesz} and \ref{T-frame} show that
 $\B_N$ is  a Riesz basis   if and only if  it is a frame or a Riesz sequence, and   Theorem \ref{T-2main} follows.

 \medskip

 \begin{proof}[Proof of Theorem \ref{T-riesz}]

Assume that  $\B_N$ is a Riesz sequence.    We prove that the $w'_j$'s are linearly independent, i.e,  that  $\sum_{j=1}^N c_jw'_j=0 $ if and only if the $c_j$'s are $=0$.

Every $w\in H$ can be written as   $w=w'+w''$, with $w''  =\sum_{k=N+1}^\infty  \l w' , \, v_k\r v_k$.
Thus,
$$
\Big\Vert \sum_{j=1 }^N c_j w_j'\Big\Vert^2
= \Big\Vert  \sum_{j=1 }^N c_j w_j -\sum_{j=1 }^N c_j w_j''\Big\Vert^2=
\Big\Vert  \sum_{j=1 }^N c_j w_j -\sum_{k=N+1}^\infty \!\!\! \big(\sum_{j=1}^N c_j  \l w''_j, \, v_k\r\big) v_k \Big\Vert^2\!.
$$
Since $\B_N$ is a Riesz sequence, we  have that
$$\Big\Vert \sum_{j=1 }^N c_j w_j'\Big\Vert^2\ge A_N\left(\sum_{j=1 }^N|c_j|^2+ \sum_{k=N+1}^\infty \big|\sum_{j=1}^N c_j  \l w''_j, \, v_k\r\big|^2\right)\ge  A_N \sum_{j=1 }^N|c_j|^2
$$
which shows that  the $w'_j$'s are linearly independent.

 \medskip
We prove  that  if  the $w'_j$'s are  linearly independent, then $\B_N$ is a Riesz basis of $H$ (and so it is  also a Riesz sequence).
To do this, we show that every $f\in H$ has a unique representation in terms of  elements of $\B_N$.

 The set   $\{w'_1, \, ...,\, w'_N\}$ is a basis of  $H_N'$ and the set $\{v_{N+1},\, v_{N+2},\, ...\} $ is an orthonormal basis of $H''_N$.    Thus, we can write $f=f'+f''$, with  $f'=\sum_{j=1}^N c_j  w'_j $ and $f''= \sum_{k=N+1}^\infty d_kv_k$.
Arguing as in the first part of the proof, we   obtain
 $$f=  \sum_{j=1}^N c_j  w'_j + \sum_{k=N+1}^\infty d_kv_k=
 \sum_{j=1}^N c_j  w _j + \sum_{k=N+1}^\infty \left( d_k- \sum_{j=1}^N c_j  \l w''_j, \, v_k\r\right)v_k
 $$
which gives a representation  of $f$ in terms of   elements of $\B_N$.  It is trivial to show that
this representation is unique and so that $\B_N$ is a Riesz basis.

 \end{proof}

 To simplify notation  in the proof of    Theorem \ref{T-frame}  and   Lemma \ref{L-prelim-reduction}  below,   we let
  $\hat H_N= \mbox{span}\{v_1, ...,\, v_N,\, w_1, ...w_N\}$;  we also let $\hat H_N'= \hat H_N\cap H' _N$
 and    $\hat H''_N= \hat H_N\cap H''_N$.   Clearly, $\hat H_N' =  H_N'$  and    $\hat H''_N=\mbox{span}\{ w''_1, ...,\, w''_N\}$.
We let    $S_H=\{ f\in H \ : \  ||f||=1\}$.

 We prove  the following easy, but important
\begin{Lemma}\label{L-prelim-reduction}
 $\B_N$ is a frame of $H$  if and only if   for every $f\in S_H\cap \hat H_N $, we have that
\begin{equation}\label{e-basicIneq}
 \sum_{j=1 }^N ( |\l f, w_j\r|^2- |\l f, v_j\r|^2 ) +1 >0.
\end{equation}
 The frame bounds of $\B_N$ are the maximum and minimum of  the functional $f\to \sum_{j=1 }^N ( |\l f, w_j\r|^2- |\l f, v_j\r|^2 ) +1$ in $S_H \cap \hat H_N$.
\end{Lemma}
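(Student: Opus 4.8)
The plan is to rewrite the frame functional in a form that only sees the finite-dimensional subspace $\hat H_N$, and then to analyze the resulting quadratic form by an elementary trace computation. For $f\in H$ set $\Phi(f)=\sum_{j=1}^N|\l f,w_j\r|^2+\sum_{j=N+1}^\infty|\l f,v_j\r|^2$, so that, by the discussion in the Introduction together with homogeneity, $\B_N$ is a frame precisely when $\Phi$ is bounded above and below by positive constants on $S_H$, and the optimal constants $A_N,B_N$ equal $\inf_{S_H}\Phi$ and $\sup_{S_H}\Phi$. Using Parseval's identity $\sum_{j=1}^\infty|\l f,v_j\r|^2=\|f\|^2$ to eliminate the tail, I would first record
\[
\Phi(f)=\|f\|^2+\sum_{j=1}^N\big(|\l f,w_j\r|^2-|\l f,v_j\r|^2\big)=\|f\|^2+g(f),
\qquad g(f):=\sum_{j=1}^N\big(|\l f,w_j\r|^2-|\l f,v_j\r|^2\big),
\]
which on $S_H$ is exactly the functional in the statement.

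Next comes the reduction to $\hat H_N$. Since $v_1,\dots,v_N,w_1,\dots,w_N$ all lie in $\hat H_N$, the numbers $\l f,w_j\r$ and $\l f,v_j\r$ depend only on $Pf$, the orthogonal projection of $f$ onto $\hat H_N$, so $g(f)=g(Pf)$. For $f\in S_H$ this gives $\Phi(f)=\|f\|^2+g(Pf)=1+g(Pf)$. As $f$ ranges over $S_H$ the vector $Pf$ ranges over the whole closed unit ball of $\hat H_N$: given $u\in\hat H_N$ with $\|u\|\le 1$, since $\hat H_N^{\perp}$ is infinite dimensional one may pick $f_1\perp\hat H_N$ with $\|f_1\|^2=1-\|u\|^2$ and take $f=u+f_1\in S_H$. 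Hence $\sup_{S_H}\Phi=1+\sup_{\|u\|\le1}g(u)$ and $\inf_{S_H}\Phi=1+\inf_{\|u\|\le1}g(u)$, both extrema being attained because the unit ball of the finite-dimensional space $\hat H_N$ is compact and $g$ is continuous.

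It remains to show these extrema are attained on the sphere $S_H\cap\hat H_N$ and not in the interior of the ball, and this is the one step I would treat with care. The real-valued form $g$ is associated with a self-adjoint operator $G$ on $\hat H_N$ via $g(u)=\l Gu,u\r$; choosing any orthonormal basis $\{e_i\}$ of $\hat H_N$ and using $\sum_i|\l e_i,w_j\r|^2=\|w_j\|^2=1$ and $\sum_i|\l e_i,v_j\r|^2=\|v_j\|^2=1$ (valid because each $w_j,v_j$ is a unit vector lying in $\hat H_N$), I obtain $\operatorname{tr}G=\sum_i g(e_i)=\sum_{j=1}^N(1-1)=0$. Thus the eigenvalues of $G$ sum to zero, so its least eigenvalue $m$ satisfies $m\le0$ and its greatest $M$ satisfies $M\ge0$. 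By the homogeneity $g(ru)=r^2g(u)$ this yields $\sup_{\|u\|\le1}g=\max\{M,0\}=M$ and $\inf_{\|u\|\le1}g=\min\{m,0\}=m$, each attained on $S_H\cap\hat H_N$. Therefore $B_N=\max_{S_H\cap\hat H_N}\Phi$ and $A_N=\min_{S_H\cap\hat H_N}\Phi$; since the upper bound is always finite ($B_N\le N+1$), $\B_N$ is a frame if and only if $A_N>0$, that is, if and only if $\Phi(f)>0$ for every $f\in S_H\cap\hat H_N$. The trace identity is exactly what rules out the degenerate case in which $\inf_{\|u\|\le1}g$ is achieved at $u=0$, and hence is what guarantees that the frame constants are the honest extrema over $S_H\cap\hat H_N$ asserted in the statement.
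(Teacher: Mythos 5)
Your proof is correct, and its skeleton is the same as the paper's: use Parseval to rewrite $\Vert T_Nf\Vert_{\ell^2}^2$ as $\Vert f\Vert^2+\sum_{j=1}^N(|\langle f,w_j\rangle|^2-|\langle f,v_j\rangle|^2)$, observe that the quadratic part depends only on the projection of $f$ onto $\hat H_N$, and invoke compactness of the unit sphere of the finite-dimensional space $\hat H_N$ to get attained, strictly positive bounds. The one place where you go beyond the paper is worth highlighting: the paper simply asserts that, since $\Pi_N(f)=\Pi_N(g)$ with $g$ the projection of $f$, the extrema of $\Pi_N$ over $S_H$ are attained on $S_H\cap\hat H_N$; but a priori the projection only ranges over the closed unit ball of $\hat H_N$, and a degree-$2$ homogeneous form can attain its extremum over a ball at the origin rather than on the sphere (namely when the form is definite). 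Your trace computation --- $\operatorname{tr}G=\sum_{j=1}^N(\Vert w_j\Vert^2-\Vert v_j\Vert^2)=0$, so the least eigenvalue is $\le 0\le$ the greatest --- is exactly what rules this out and legitimizes the passage from the ball to the sphere; it is the only point at which the hypothesis that the $w_j$ are unit vectors is used. So your argument is not a different route but a patched version of the paper's route: it supplies a justification for a step the paper leaves implicit, at the modest cost of introducing the self-adjoint operator $G$ on $\hat H_N$.
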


 \begin{proof}  Let   $T_Nf=(\l f,\, w_1\r, \, ...,\, \l f,\, w_N\r, \, \l f,\, v_{N+1}\r, ...\,)$ be the analysis operator of $\B_N$.  By  Plancherel's theorem,   $1= \sum_{j=1 }^\infty  |\l f, v_j\r|^2$  for every  $f\in S_H$,  and
 $$
 ||T_Nf||_{\ell^2}^2=  \sum_{j=1 }^N  |\l f,\, w_j\r|^2+\sum_{k=N+1 }^\infty  |\l f,\, v_k \r|^2=
 \sum_{j=1 }^N ( |\l f,\, w_j\r|^2- |\l f,\, v_j\r|^2 ) +1.
 $$
Let    $\Pi_N :S_H  \to [0, \infty) $ be the functional $$f\to ||T_Nf||_{\ell^2}^2=  \sum_{j=1 }^N ( |\l f,\, w_j\r|^2- |\l f,\, v_j\r|^2 ) +1 .$$
 Thus, $\B_N$ is a frame with frame bounds $A_N $ and $B_N $ if and only if $A_N\leq \Pi_Nf \leq B_N$
 for every $f\in S_H$.

For $f\in S_H$,  we can write $f=g+h$, where $g $  is the orthogonal projection of $f$ on $\hat H_N  $
 and $h$ is the projection on the orthogonal complement of $\hat H_N$.
 Since $\Pi_N(f)=\Pi_N(g)$,
 the maximum and minimum of $\Pi_N$ on $S_H$ are attained on $S_H\cap \hat H_N$.

If  $\B_N$ is a frame, we have that $A_N\leq ||T_Nf||_{\ell^2}^2=\Pi_Nf\leq B_N$ whenever $f\in S_H\cap \hat H_N$, which proves \eqref{e-basicIneq}.

Assume now that $\Pi_N>0$ for every $f\in S_H \cap \hat H_N$.
Since $\Pi_N$ is continuous and positive in $H$ and  $S_H \cap \hat H_N$ is compact  (because it is closed and bounded in a finite dimensional   subspace),    by Weierstrass' theorem
there exist constants $0<A_N\leq B_N<\infty $ for which $A_N\leq \Pi_Nf\leq B_N$ whenever $f\in S_H \cap \hat H_N$, which proves that  $\B_N$ is   frame.
  \end{proof}

   \medskip

  \begin{proof}[Proof of Theorem \ref{T-frame}]

We prove both implications of Theorem \ref{T-frame} by contradiction.

If  $\B_N$ is a frame and  the $w'_j$ do  not span  $H_N'$, we can find  a function $\psi \in H'_N$ that is orthogonal to  $W'_N $. We can assume that $||\psi  ||=1$. Thus,
$1= \sum_{j=1 }^N |\l \psi ,\, v_j\r|^2$, and
\begin{eqnarray*}
 \Pi_N\psi & = &  \sum_{j=1 }^N  ( |\l \psi ,\, w_j\r|^2 -|\l \psi ,\, v_j\r|^2)+1\\
 & = & \sum_{j=1 }^N  ( |\l \psi ,\, (w_j'+w_j'')\r|^2 -|\l \psi ,\, v_j\r|^2)+1\\
 & = & \sum_{j=1 }^N    |\l \psi ,\, w_j'\r|^2
   = 0.
 \end{eqnarray*}
 which, by Lemma \ref{L-prelim-reduction} is a contradition.

 If the $w'_j$ spans $H_N'$   and   $\B_N$ is not a frame, by   Lemma \ref{L-prelim-reduction}
  we have that $\min_{f\in S_H\cap \hat H_N} \Pi_Nf=0$. Let $h \in S_H\cap \hat H_N$ be such that
  $
  \Pi_N h= 0$. We can write $h=h'+h''$, where  $h'$  is the orthogonal projection of $h$ on $  H_N'  $.  Since $1=||h||^2=||h'||^2+||h''||^2$ and $||h'||^2=\sum_{j=1 }^N |\l   h, v_j\r|^2 $, we have that
  $$0=\Pi_Nh= \sum_{j=1 }^N  ( |\l (h'+h'') ,\, w_j\r|^2 -|\l h' ,\, v_j\r|^2)+1
  $$$$=
  \sum_{j=1 }^N   |\l h'+h'',\, w_j\r|^2 +||h''||_2^2 . $$
Necessarily, all terms  in the sum equal 0 and so we have
  $h''=0$ and  $  |\l h' ,\, w_j \r|^2 $ $ = |\l h' ,\, w_j'\r|^2  =0$ for every $j=1$, ...,\,$N$.
Thus,  $h'$ is orthogonal to each $w'_j$ but since we have assumed that the $w'_j$ span $H'_N$, we have that $h'=0$, which is a contradiction. The theorem is proved.

  \end{proof}

 Recall that we have denoted with $\tilde B_N$ the set $  \{w'_1, ...,\, w'_N, v_{N+1},  v_{N+2}, ...\}$. From the proof of Theorem \ref{T-riesz} we can easily infer the following

  \begin{Cor}\label{C-reductions-basis}
  $\B_N$ is a Riesz basis  of  $H$ if and only if $\tilde B_N$  is a Riesz basis of $H$.
  \end{Cor}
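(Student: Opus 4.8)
The plan is to observe that $\tilde B_N$ is itself a system of exactly the form treated in Theorem \ref{T-riesz}, and then to read off the conclusion. Writing $\tilde B_N=\{w_1',\dots,w_N'\}\cup\{v_j\}_{j\ge N+1}$, this set is obtained from the orthonormal basis $\{v_j\}_{j\in\N}$ by replacing $v_1,\dots,v_N$ with the $N$ vectors $w_1',\dots,w_N'$, in precisely the way $\B_N$ is obtained by replacing them with $w_1,\dots,w_N$. The only formal difference is that the $w_j'$ need not be unit vectors; however, the argument given in the proof of Theorem \ref{T-riesz} never uses the normalization $\|w_j\|=1$, so it applies verbatim to any family of replacement vectors, in particular to $\{w_j'\}$.

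Next I would compute the projected system attached to $\tilde B_N$. Since $w_j'=p_N'(w_j)\in H_N'$, its orthogonal projection onto $H_N'$ is itself, i.e.\ $p_N'(w_j')=w_j'$. Hence the set of projections onto $H_N'$ of the first $N$ vectors of $\tilde B_N$ is again $W_N'=\{w_1',\dots,w_N'\}$. Applying the criterion in the proof of Theorem \ref{T-riesz} to $\tilde B_N$ therefore shows that $\tilde B_N$ is a Riesz basis of $H$ if and only if $W_N'$ is linearly independent. On the other hand, Theorem \ref{T-riesz} applied to $\B_N$ itself says that $\B_N$ is a Riesz basis of $H$ if and only if $W_N'$ is linearly independent. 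Since the two criteria are the same condition, the desired equivalence follows.

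There is essentially no obstacle here beyond the bookkeeping above; the one point that needs a remark is the applicability of Theorem \ref{T-riesz} to the (possibly non-unit) vectors $w_j'$. To sidestep this entirely one may argue directly from the orthogonal splitting $H=H_N'\oplus H_N''$: for any finite family of scalars, $\sum_{j\le N}c_jw_j'\in H_N'$ and $\sum_{k>N}d_kv_k\in H_N''$ are orthogonal, so $\|\sum_{j\le N}c_jw_j'+\sum_{k>N}d_kv_k\|^2=\|\sum_{j\le N}c_jw_j'\|^2+\sum_{k>N}|d_k|^2$. Since $\{v_j\}_{j\ge N+1}$ is an orthonormal basis of $H_N''$, this identity shows that $\tilde B_N$ is a Riesz basis of $H$ exactly when $W_N'$ is a Riesz basis of the $N$-dimensional space $H_N'$, which in finite dimensions holds exactly when $W_N'$ is a basis, i.e.\ when $W_N'$ is linearly independent. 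Combined with Theorem \ref{T-2main}, this recovers the corollary.
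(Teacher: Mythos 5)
Your proposal is correct and follows essentially the same route as the paper, whose entire proof is to apply Theorem \ref{T-2main} to both $\B_N$ and $\tilde B_N$ and observe that, since $p_N'(w_j')=w_j'$, the criterion (linear independence of $W_N'$) is literally the same for both sets. Your extra remark that the $w_j'$ need not be unit vectors --- and your fallback direct argument via the orthogonal splitting $H=H_N'\oplus H_N''$ --- addresses a small point the paper glosses over, but it does not change the substance of the argument.
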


  \begin{proof}  By Theorem \ref{T-2main}, both  $\B_N$  and $\tilde B_N$ are   Riesz bases   if and only if the set $W'_N=\{w'_1, \, ...,\, w'_N\}$ is linear independent.

  \end{proof}

 \noindent

\section{Proof of Theorem \ref{T-3main}}

We let $ \dsize  U_N$ be the  $N\times N$ matrix whose elements are $ u_{i,j}=\l  w _i,\, w _j\r$. Note that $ U_N= U_N'+ U_N''$ where $\dsize  U_N'=\{\l  w' _i,\, w' _j\r\}_{1\leq i,\, j\leq N}$ and $\dsize  U_N''=\{\l  w'' _i,\, w'' _j\r\}_{1\leq i,\, j\leq N}$.

We recall that by Theorem \ref{T-2main} and Corollary \ref{C-reductions-basis}, $\B_N$ and $\tilde \B_N$ are  Riesz bases of $H$ if and only if the set $W_N'=\{w'_1,\, ...,\, w'_N\}$ is a  basis of  $H_N'$.
\medskip
  Theorem \ref{T-3main} follows from Theorems \ref{T-const-tB} and \ref{T-const-B} below and Remark \ref{r-proofT3}.
\begin{Thm}\label{T-const-tB}
Assume that $W_N'$ is a basis of  $H_N'$. Then,
 $\tilde B_N$ is a Riesz basis of  $H$ with frame constants
\begin{equation}\label{e-const11}
  \tilde B_N=\max_{ |\vec c\,|\leq 1  }    \Big\{\l  U_N'\, \vec c,\     \vec c \,    \r +1- |\vec c\,|^2\Big\},\
 \tilde A_N=\min_{ |\vec c\,|\leq 1  } \Big\{   \l  U_N'\, \vec c,\     \vec c \,    \r +1- |\vec c\,|^2\Big\}.
 \end{equation}

 Let    $\lambda_N$ and $\Lambda_N$ be the maximum and minimum eigenvalue of the matrix  $\dsize  U_N'= \{\l  w'_s,\, w'_j\r\}_{1\leq j,\, s\leq N} $.   Then $\tilde B_N= \max\{\Lambda_N,\,1\} $ and $\tilde A_N=
\lambda_N $.
 \end{Thm}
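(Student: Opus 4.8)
The plan is to establish the two assertions of the theorem in turn: first that $\t\B_N$ is a Riesz basis, and then to identify its optimal frame constants with both the extremal expressions in \eqref{e-const11} and the eigenvalues $\Lambda_N,\lambda_N$ of $U_N'$. For the Riesz basis claim, I would observe that since each $w_j'$ already lies in $H_N'$, the set $\t\B_N$ is itself the replacement set built from ${\cal V}$ using the vectors $w_j'$, and its projected vectors $p_N'(w_j')=w_j'$ are linearly independent exactly when $W_N'$ is a basis of $H_N'$. Thus the hypothesis on $W_N'$ together with Theorem \ref{T-2main} (applied to $\t\B_N$) and Corollary \ref{C-reductions-basis} immediately gives that $\t\B_N$ is a Riesz basis of $H$.

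For the frame constants I would use that, by definition and the identity established at the start of the proof of Lemma \ref{L-prelim-reduction}, the optimal constants of $\t\B_N$ are the extreme values over $f\in S_H$ of $\Vert T_Nf\Vert^2=\sum_{j=1}^N|\l f, w_j'\r|^2+\sum_{k>N}|\l f, v_k\r|^2$, where $T_N$ is the analysis operator of $\t\B_N$. For a unit vector $f$ I would set $g=p_N'f$ and let $\vec c=(\l f, v_1\r,\ldots,\l f, v_N\r)$ be the coordinate vector of $g$ in the orthonormal basis $v_1,\ldots,v_N$, so that $|\vec c|=\Vert g\Vert\le 1$. Because every $w_j'$ lies in $H_N'$ one has $\l f, w_j'\r=\l g, w_j'\r$, while $\sum_{k>N}|\l f, v_k\r|^2=\Vert p_N''f\Vert^2=1-|\vec c|^2$, whence $\Vert T_Nf\Vert^2=\sum_{j=1}^N|\l g, w_j'\r|^2+1-|\vec c|^2$. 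Conversely, any $\vec c$ with $|\vec c|\le 1$ arises from a unit $f$: take $g=\sum_{k\le N}c_kv_k$ and complete it by an arbitrary vector of norm $\sqrt{1-|\vec c|^2}$ in the nontrivial subspace $H_N''$. Hence $\vec c$ ranges over the entire closed unit ball of $\C^N$, which yields \eqref{e-const11}.

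To pass to the eigenvalue description, I would note that $g\mapsto\sum_j|\l g, w_j'\r|^2=\l Sg,g\r$ where $S$ is the frame operator of $\{w_j'\}$ on $H_N'$; its matrix in the coordinate $\vec c$ is Hermitian and, since $W_N'$ is a basis, positive definite. As the frame operator and the Gram matrix $U_N'$ share the same (here all positive) eigenvalues, a unitary change of the variable $\vec c$ turns this form into $\l U_N'\vec c,\vec c\r$ while preserving $|\vec c|$. Diagonalizing $U_N'$ and writing $\vec c$ in an orthonormal eigenbasis with eigenvalues $\mu_i\in[\lambda_N,\Lambda_N]$, the functional becomes $1+\sum_i(\mu_i-1)|c_i|^2$ on $|\vec c|\le 1$, whose maximum is $\max\{\Lambda_N,1\}$ (attained on a top eigenvector when $\Lambda_N>1$ and at $\vec c=0$ otherwise) and whose minimum is $\min\{\lambda_N,1\}$. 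Finally $\lambda_N\le\min_j\Vert w_j'\Vert^2\le 1$, since the smallest eigenvalue of a Hermitian matrix is at most its smallest diagonal entry and $\Vert w_j'\Vert\le\Vert w_j\Vert=1$; therefore $\min\{\lambda_N,1\}=\lambda_N$, giving $\t B_N=\max\{\Lambda_N,1\}$ and $\t A_N=\lambda_N$.

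The step I expect to require the most care is the correct accounting of the component of $f$ lying in $H_N''$. It is precisely this component that contributes the term $1-|\vec c|^2$ and forces $\vec c$ to range over the whole unit \emph{ball} rather than the unit sphere; omitting it would restrict the optimization to $\Vert g\Vert=1$ and replace $\max\{\Lambda_N,1\}$ by $\Lambda_N$, producing the wrong upper bound whenever $\Lambda_N<1$ (which occurs exactly when all the $\Vert w_j'\Vert$ are small). A secondary point to justify carefully is that the matrix of $S$ in the $v$-coordinates is not literally $U_N'$ but a unitarily equivalent matrix with the same eigenvalues, so the identification of the quadratic form with $\l U_N'\vec c,\vec c\r$ must be made through this change of variables rather than taken verbatim.
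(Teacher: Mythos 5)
Your proof is correct, but it reaches \eqref{e-const11} from the opposite side of the frame/Riesz-sequence duality than the paper does. The paper optimizes the synthesis functional $F(\vec b,\vec c)=\big\Vert\sum_{j\le N}c_jw_j'+\sum_{k>N}b_kv_k\big\Vert^2$ over $|\vec c\,|^2+|\vec b\,|^2=1$; since the $w_j'$ lie in $H_N'$ and the $v_k$ ($k>N$) in $H_N''$, orthogonality gives $F=\l U_N'\vec c,\vec c\r+1-|\vec c\,|^2$ directly, so the Gram matrix appears verbatim and no change of variables is needed. You instead optimize the analysis functional $\Vert T_Nf\Vert^2$ over the unit sphere of $H$, which produces the quadratic form of the frame operator of $W_N'$ in the $v$-coordinates --- a matrix unitarily equivalent to, but not equal to, $U_N'=M_NM_N^*$ --- and you must pass through this equivalence of equal-spectrum matrices to recover \eqref{e-const11} as an identity of extremal values; you correctly flag that subtlety, and your argument that every $\vec c$ in the closed unit ball is realized by some unit $f$ (using $H_N''\neq\{0\}$) is the right way to see why the optimization runs over the ball rather than the sphere. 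Your route also implicitly uses that the optimal frame bounds and the optimal Riesz-sequence bounds of a Riesz basis coincide (recalled in the Appendix from \cite{Cr}), whereas the paper works with the synthesis bounds throughout. The eigenvalue step, and in particular the observation $\lambda_N\leq\min_{1\le j\le N}\Vert w_j'\Vert^2\leq 1$ which upgrades $\min\{\lambda_N,1\}$ to $\lambda_N$, matches the paper's argument (the paper gets the same bound by evaluating $F$ at coordinate vectors, as in \eqref{tAN}).
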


 \begin{Thm}\label{T-const-B}
Assume that $W_N'$ is a basis of  $H_N'$. Then,
 $  B_N$ is a Riesz basis in $H$ with frame constants
\begin{equation}\label{e-const111}
   B_N=\max_{ |\vec c\,|\leq 1  }   \Big\{ \l  U_N'\, \vec c,\     \vec c \,    \r +\left(
   \sqrt{1- |\vec c\,|^2}+\sqrt{ \l  U_N''\, \vec c,\     \vec c\,\r }\right)^2\Big\},$$$$
A_N=\min_{ |\vec c\,|\leq 1  }   \Big\{ \l  U_N' \, \vec c,\     \vec c \,    \r +\left(
   \sqrt{1- |\vec c\,|^2}-\sqrt{ \l  U_N''\, \vec c,\     \vec c\,\r }\right)^2\Big\}.
 \end{equation}   \end{Thm}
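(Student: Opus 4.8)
The plan is to reduce, exactly as for $\tilde\B_N$ in Theorem \ref{T-const-tB}, to an extremal problem for the functional $\Pi_N$ of Lemma \ref{L-prelim-reduction}, and then to evaluate that extremum. Since $W_N'$ is a basis of $H_N'$, Theorem \ref{T-2main} already guarantees that $\B_N$ is a Riesz basis, so the only content is the explicit form of the optimal constants. By Lemma \ref{L-prelim-reduction}, $B_N$ and $A_N$ are the maximum and minimum of $\Pi_N f=\sum_{j=1}^N|\langle f,w_j\rangle|^2+\|f''\|^2$ over the compact sphere $S_H\cap\hat H_N$, so both are attained. Decomposing $f=f'+f''$ with $f'\in H_N'$ and $f''\in\hat H_N''=\mbox{span}\{w_1'',\dots,w_N''\}$, and using $\langle f,w_j\rangle=\langle f',w_j'\rangle+\langle f'',w_j''\rangle$, I would record the functional as
\[
\Pi_N f=\big|Pf'+Qf''\big|^2+\|f''\|^2,
\]
where $P\colon H_N'\to\C^N$, $Pf'=(\langle f',w_j'\rangle)_{j\le N}$, and $Q\colon\hat H_N''\to\C^N$, $Qf''=(\langle f'',w_j''\rangle)_{j\le N}$, are the analysis maps of $W_N'$ and of $\{w_1'',\dots,w_N''\}$. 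Here $\langle U_N'\,\vec c,\vec c\rangle$ and $\langle U_N''\,\vec c,\vec c\rangle$ are the Gram quadratic forms of these two systems, and $U_N=U_N'+U_N''$ is the Gram form of $\{w_j\}$. Equivalently $\Pi_N f=\langle\Sigma f,f\rangle$ with $\Sigma=\sum_{j=1}^N\langle\cdot,w_j\rangle w_j+p_N''$ restricted to the finite-dimensional, $\Sigma$-invariant space $\hat H_N$, so that $B_N,A_N$ are just the top and bottom eigenvalues of $\Sigma$; the whole problem is to compute these two eigenvalues.

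The essential difficulty is the cross term in $|Pf'+Qf''|^2$, i.e. the off-diagonal coupling between $H_N'$ and $\hat H_N''$ inside $\Sigma$. I would isolate it with the Cauchy--Schwarz/triangle inequality in $\C^N$,
\[
\big(|Pf'|-|Qf''|\big)^2\le\big|Pf'+Qf''\big|^2\le\big(|Pf'|+|Qf''|\big)^2,
\]
which turns the objective into the two one-sided estimates $(|Pf'|\pm|Qf''|)^2+\|f''\|^2$ and explains the shape $(\sqrt{\,\cdot\,}\pm\sqrt{\,\cdot\,})^2$ of \eqref{e-const111}: the outer square collects the $\|f''\|$ contribution (through $\sqrt{1-|\vec c|^2}$, since $\|f'\|^2+\|f''\|^2=1$) and the $|Qf''|$ contribution (through $\sqrt{\langle U_N''\vec c,\vec c\rangle}$), while $\langle U_N'\vec c,\vec c\rangle$ collects the $|Pf'|^2$ contribution. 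I would then extremize in two stages: first optimize the relative phase/direction of $f''$ against $f'$, governed by the above inequality, and then the remaining lengths; introducing the single parameter $\vec c\in\C^N$, $|\vec c|\le1$, exactly as in the proof of Theorem \ref{T-const-tB}, collapses the problem to the finite-dimensional optimizations displayed in \eqref{e-const111}. This reproduces, and refines by the $U_N''$-term, the computation already carried out for $\tilde\B_N$.

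The main obstacle is sharpness: Cauchy--Schwarz gives clean bounds, but to conclude that they are the actual extreme eigenvalues I must show they are attained at the optimal $\vec c$, i.e. produce an $f\in S_H\cap\hat H_N$ for which $Pf'$ and $Qf''$ are genuinely parallel (antiparallel for the minimum) with aligned phases, subject to $\|f'\|^2+\|f''\|^2=1$. Since $P$ is a bijection of $H_N'$ onto $\C^N$ (here $W_N'$ is a basis) while $Q$ is injective with range a fixed subspace of $\C^N$, I would first choose $f''$, thereby fixing $Qf''$ in the range of $Q$, then choose $f'$ so that $Pf'$ is a positive (resp. negative) scalar multiple of $Qf''$, and finally scale to meet the norm constraint. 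Verifying that this construction realizes the maximizer/minimizer of the $\vec c$-functional, rather than merely bounding $\Pi_N$, is the crux, and it is where the angle between $H_N'$ and $\hat H_N''$ (encoded by the common $\vec c$ in the two quadratic forms) must be tracked; note that the functionals themselves need not coincide with $\Pi_N$, only their extreme values. As a guiding check I would first settle $N=1$: with $p=|\langle w_1,v_1\rangle|^2$, $\Pi_N$ reduces to a single sinusoid of amplitude $\sqrt{1-p}$ about the value $1$, giving $B_1=1+\sqrt{1-p}$ and $A_1=1-\sqrt{1-p}$, in agreement with \eqref{e-const111}. Combining the computed constants with Theorem \ref{T-2main} then confirms that $\B_N$ is a Riesz basis with frame bounds $A_N,B_N$.
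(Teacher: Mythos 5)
Your reduction is fine as far as it goes: Lemma \ref{L-prelim-reduction} and Theorem \ref{T-2main} do reduce everything to evaluating the extrema of $\Pi_N f=|Pf'+Qf''|^2+\|f''\|^2$ on $S_H\cap\hat H_N$, and that identity is correct. But the heart of the theorem is the evaluation itself, and there your plan has a genuine gap --- one you yourself call ``the crux'' and then do not close. Worse, the decoupling step (bound $|Pf'+Qf''|$ by $|Pf'|+|Qf''|$, then optimize directions and lengths separately) computes the wrong quantity: neither $P$ nor $Q$ is an isometry, so the image of a sphere under $P$ or $Q$ is an ellipsoid, and the Cauchy--Schwarz bound cannot in general be saturated at vectors where $|Pf'|$ and $|Qf''|$ are simultaneously extremal. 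Concretely, take $N=2$, $w_1=v_1$, $w_2=(v_2+v_3)/\sqrt2$, so $U_2'=\mathrm{diag}(1,1/2)$, $U_2''=\mathrm{diag}(0,1/2)$. Formula \eqref{e-const111}, or a direct computation of the top eigenvalue of the frame operator on $\mathrm{span}\{v_1,v_2,v_3\}$, gives $B_2=1+1/\sqrt2\approx1.707$; your decoupled bound $\max_{r^2+s^2=1}\bigl\{(\sqrt{\Lambda}\,r+\sqrt{\mu}\,s)^2+s^2\bigr\}$, with $\Lambda=\lambda_{\max}(U_2')=1$ and $\mu=\lambda_{\max}(U_2'')=1/2$, equals $2$. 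The obstruction is visible: the maximizer of $|Pf'|$ is $f'=v_1$, with $Pf'=(1,0)$, which is never parallel to $Qf''\in\mathrm{span}\{(0,1)\}$. Your proposed alignment (fix $f''$, then pick $f'$ with $Pf'$ a positive multiple of $Qf''$) only yields equality in Cauchy--Schwarz at special pairs; it shows neither that the true extremum occurs at such a pair nor --- more importantly --- that the resulting value equals the displayed formula, whose defining feature is that both Gram forms are evaluated at a \emph{common} $\vec c$. On your analysis side $f'$ and $f''$ are independent variables, so no single parameter $\vec c$ ever arises; note also that if $\vec c$ is the coordinate vector of $f'$ in $\{v_1,\dots,v_N\}$, then $|Pf'|^2=|\overline{M_N}\,\vec c\,|^2$ is the quadratic form of $\overline{M_N^*M_N}$, not of $U_N'=M_NM_N^*$ (same spectrum, different matrices), so your term-by-term dictionary with \eqref{e-const111} does not even parse.

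The paper works on the other side of the duality, and that is exactly what makes the computation go through. Since the optimal frame bounds and Riesz-sequence bounds of a Riesz basis coincide (\cite[Prop.\ 3.5.5]{Cr}), one may instead extremize the synthesis functional $G(\vec b,\vec c)=\bigl\Vert\sum_{j\le N}c_jw_j+\sum_{k>N}b_kv_k\bigr\Vert^2$ over $|\vec c\,|^2+|\vec b\,|^2=1$. There the single vector $\vec c$ multiplies $w_j=w_j'+w_j''$, so $\langle U_N'\vec c,\vec c\rangle$ and $\langle U_N''\vec c,\vec c\rangle$ appear automatically at the same $\vec c$, and the cross term is $2\Re\langle\vec b,\vec\rho(\vec c)\rangle$ with $\vec\rho(\vec c)=\bigl(\sum_{j}c_j\langle w_j'',v_k\rangle\bigr)_{k>N}$. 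Crucially, $\vec b$ enters through the orthonormal tail $\{v_k\}_{k>N}$, i.e.\ isometrically, so for fixed $\vec c$ the optimization over $\vec b$ is exact alignment, $\vec b=\pm\sqrt{1-|\vec c\,|^2}\,\vec\rho(\vec c)/|\vec\rho(\vec c)|$, and the identity $|\vec\rho(\vec c)|^2=\langle U_N''\vec c,\vec c\rangle$ then yields \eqref{e-const111}. This dual computation is what should replace your Cauchy--Schwarz decoupling; your frame-side setup can only be salvaged by first invoking this equality of bounds and then doing the synthesis-side work anyway. (Your $N=1$ check is correct: \eqref{e-const111} gives $B_1=1+\|w_1''\|$ and $A_1=1-\|w_1''\|$; the worked example after the paper's proof, which states $1\pm\|w_1''\|^2$, contains an algebra slip in the cross term.)
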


  \medskip

 \begin{proof}[Proof of Theorem \ref{T-const-tB}]
Recall that the frame constants of $\tilde\B _N$ are the supremum and    infimum  of the function \begin{equation}\label{e-defF} F(\vec b, \vec c)=
\Big\Vert \sum_{j=1}^N c_jw'_j+ \sum_{k=N+1} ^\infty  b_kv_k \Big\Vert^2,
\end{equation}
 where $\vec c=  (c_1, ...,\,c_N)$ and $\vec b=( b_{N+1},  \,  b_{N+2},...)$ are such that $|\vec c\, |^2+ |\vec b\,|^2=1$.
  Since
  $$
  F (\vec b, \vec c)=  \Big\Vert \sum_{j=1}^N c_jw'_j \Big\Vert^2+
   \Big\Vert  \sum_{k=N+1}^\infty  b_kv_k \Big\Vert^2,
   $$
   $$=  \sum_{i,j=1}^N c_i\bar c_j\l w'_i,\, w'_j\r  +\sum_{k=N+1}^\infty |b_k|^2
   = \l  U_N'\, \vec c,\     \vec c \,    \r +1-|\vec c\,|^2
   $$ we  have \eqref{e-const11}.  Note that  $F(\vec b, 0)=1 $ whenever $|\vec b|=1$, and so  $\tilde B_N\ge  1$  and $\tilde A_N\leq  1$.
  By a well-known theorem of linear algebra,  the maximum and   minimum  eigenvalue of $ U_N'$ are maximum and minimum of the  Hermitian quadratic form $\vec c\to \l  U_N'\,\vec c,\     \vec c \,    \r$ when $|\vec c\,|=1  $.
 Thus,
 $\lambda_N|\vec c\,|^2 \leq \l  U_N'\, \vec c,\     \vec c \,    \r \leq \Lambda_N |\vec c\,|^2$ for every $\vec c\in\C^N$.  Using this inequality and the previous observation,  from   \eqref{e-const11} we  obtain:
  $\tilde B_N= \max\{\Lambda_N, 1\}$ and $\tilde A_N=
 \min \{\lambda_N, 1\}$. Finally, we can chose  $\vec c$ and $\vec b$  so that $F(\vec b,\, \vec c)=||w'_j||^2$; thus,
\begin{equation}
\tilde A_N \leq \min_{1\leq j\leq N}||w'_j||^2\leq 1,\label{tAN}
\end{equation}
 from which follows that  $\tilde A_N =\lambda_N$.
   \end{proof}

\begin{proof}[Proof of Theorem \ref{T-const-B}]
 The frame constants of $\B _N$ are the supremum and    infimum  of the function
  $$
  G(\vec b, \, \vec c\,)= \Big\Vert \sum_{j=1}^N c_jw_j+ \sum_{k=N+1}^\infty  b_kv_k \Big\Vert^2
  $$
  where $|\vec c\, |^2+ |\vec b\,|^2=1$.  We gather:
 $$
 G(\vec b, \, \vec c\,) =\Big\Vert \sum_{j=1}^N c_j \omega_j  \Big\Vert^2 +   \sum_{k=N+1} ^\infty |b_k| ^2 +
   2\Re   \sum_{j=1}^N  \sum_{k=N+1}^ \infty  c_j \bar b_k\l w  _j,\, v_k\r
  $$
   \begin{equation}\label{e-int1} =    \l  U_N\vec c,\    \vec c \,  \,  \,  \r +1-|   \vec c \,  \,  \,  |^2 +2\Re   \sum_{j=1}^N  \sum_{k=N+1}^ \infty  c_j \bar b_k\l w'' _j,\, v_k\r.
   \end{equation}
   $$= \l  U_N\vec c,\    \vec c \,  \,  \,  \r +1-|   \vec c \,  |^2 +2\Re   \l \vec b,\, \vec \rho(\vec c )\r
   $$
   where we have  let
  $\dsize \vec \rho(\vec c)=\Big(\sum_{j=1}^N c_j \l w'' _j,\, v_{N+1}\r,\,   \sum_{j=1}^N c_j \l w'' _j,\, v_{N+2}\r,\, ... \Big)$.

Fix  $\vec c \in \C^N $,  with $ |\vec c\,|=1 $;   the function   $\vec b \to
    2\Re   \l \vec b,\, \vec \rho(\vec c )\r
   $
 is  maximized and minimized  when $\vec b$ is a constant multiple of  $\vec \rho(\vec c)$. Since $|\vec b\,|^2=1-|\vec c\,|^2$, we  can choose $\vec b= \sqrt{1-|\vec c\,|^2} \frac{\vec \rho(\vec c)}{|\vec \rho(\vec c)|}$. Thus, for every   $\vec c\in \C^N $   and for every $\vec b\in \ell^2$ with $|\vec b\,|^2= 1 -|\vec c\,|^2$,
 we have that
\begin{equation}\label{e-in1}
   \l  U_N\vec c,\    \vec c \,    \r +1-|   \vec c \,  |^2-2 |\vec \rho(\vec c)|\,\sqrt{1-|\vec c\,|^2}\leq G(\vec c,\, \vec b\,).
  \end{equation}
    and
   \begin{equation}\label{e-in2}
    G(\vec c,\, \vec b\,)\leq \l  U_N\vec c,\    \vec c \,    \r +1-|   \vec c \,  |^2+2 |\vec \rho(\vec c)|\,\sqrt{1-|\vec c\,|^2}.
  \end{equation}
  Let us evaluate the  norm of $\vec\rho(\vec c)$. If we denote with
   $A$ the   matrix whose $(k-N)$-th  row is   $  (\l w''_1,\, v_k\r,\, ...,\, \l w''_N,\, v_k\r)$,
  we can write
   $$
  | \vec\rho(\vec c)|^2=  \sum_{k=N+1}^\infty \Big|\sum_{j=1}^N    c_j  \l w'' _j,\, v_k\r\Big| ^2=
   || A\vec c\,||^2=\l A^*A \vec c,\ \vec c\,\r.
    $$
    We have  $A^*A=\{\alpha_{j,s}\}_{1\leq j,s\leq N}$, with $\dsize \alpha_{j,s}= \sum_{k=N+1}^ \infty \l w_j'',\, v_k\r \overline{\l w_s'',\, v_k\r }.
   $
   Since $\{v_k\}_{k \geq N+1}$ is an orthonormal basis on $H_N''$, we can easily verify (or see also \eqref{e-Planch}) that
   $\alpha_{j,s}=\l w''_j, \, w''_s\r$.
   Thus, $A^*A=  U_N''$ and
   $
   | \vec\rho(\vec c)|^2 =   \l  U_N''\vec c,\    \vec c \,    \r .
   $

 From \eqref{e-in1} and \eqref{e-in2} follow  that
    $$
    \l  U_N\vec c,\   \vec c \,  \,  \r +1-|\vec c\, |^2 - 2 \sqrt{(1-|\vec c\,|^2) \l  U_N''\vec c,\ \vec c\r } \leq  G(\vec c,\, \vec b\,)
    $$
    and
    $$
      G(\vec c,\, \vec b\,)\leq \l  U_N\vec c,\   \vec c \,  \,  \r +1-|\vec c\, |^2 + 2 \sqrt{(1-|\vec c\,|^2) \l  U_N''\vec c,\ \vec c\r }.
    $$
    Since $ U_N= U_N'+ U_N''$, \eqref{e-const111}  easily follows.

  \end{proof}

  \medskip
   \noindent
   {\it  Example}.  In the simple case $N=1$, we have that $ U_1'=\{||w'_1||^2\}$ and $U_1''=\{||w''_1||^2\}$; by Theorem \ref{T-const-tB},
   $\tilde A_1=||w_1'||^2$ and $ \tilde B_1=1$.

By \eqref{e-const111},  $A_1$   is the   minimum of the function   $$f(c)=   ||w_1'||^2 c^2+\left(
   \sqrt{1-  c ^2}- c||w_1''||\right)^2 $$$$=  ||w_1'||^2 c^2+1-c^2 +||w_1''||^2 c^2 -2 ||w_1''||^2  c\sqrt{1-c^2}.
   $$
   Since $ ||w_1'||^2   +||w_1''||^2=1$, we have that
   $
   f(c)=1-2 ||w_1''||^2  c\sqrt{1-c^2}
   $  which is minimized when $c^2=\frac{1}{  2}$. Thus,
   $A_1= 1-||w''_1||^2$. The calculation of $B_1$ is similar; we obtain  $B_1= 1+||w''_1||^2$.
\medskip

 We prove the following

 \begin{Cor}\label{C-new1}
 The  frame constants   of $\tilde \B_N$ are the  minimum and maximum singular values of the matrix $M_N=\{\l w_j,\, v_k\r\}_{1\leq j,k\leq N}$, or:
  $$
  \tilde A_N=\min_{|\vec c|=1}  ||M_N\vec c\,||; \quad \tilde B_N=\max_{|\vec c|=1}  ||M_N\vec c\,||.
   $$
      Furthermore,  $\tilde B_N\leq \tilde B_{N+1}$  for every $N\ge 0$.
   \end{Cor}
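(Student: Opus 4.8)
The plan is to read both statements off from Theorem \ref{T-const-tB} through the factorization of $U_N'$ already recorded after Theorem \ref{T-3main}. That identity, $u_{i,j}=\l p'_N(w_i),\,p'_N(w_j)\r=\sum_{k=1}^N\l w_i,v_k\r\overline{\l w_j,v_k\r}$, is exactly the statement that $U_N'=M_N M_N^*$ with $M_N=\{\l w_j,v_k\r\}_{1\le j,k\le N}$. Hence $\l U_N'\vec c,\vec c\r=\|M_N^*\vec c\|^2$, so the extreme eigenvalues $\lambda_N,\Lambda_N$ of $U_N'$ are the squares of the smallest and largest singular values of $M_N$ (recall that $M_N$ and $M_N^*$ have the same singular values, and that $\min/\max_{|\vec c\,|=1}\|M_N\vec c\,\|$ are precisely these). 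Theorem \ref{T-const-tB} identifies the frame constants of $\tilde\B_N$ as $\tilde A_N=\lambda_N$ and $\tilde B_N=\max\{\Lambda_N,1\}$, which is the first assertion expressed through the singular values of $M_N$. The one point demanding care is the constant $1$ in the upper bound: it enters because the tail $\{v_k\}_{k>N}$ is orthonormal and lies in $H''_N$, so the frame operator of $\tilde\B_N$ splits as the frame operator of $\{w'_1,\dots,w'_N\}$ on $H'_N$ (whose spectrum is that of $U_N'$) plus the identity on $H''_N$; I would make this block splitting explicit to justify reading $\tilde A_N,\tilde B_N$ directly off $M_N$.

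For the monotonicity $\tilde B_N\le\tilde B_{N+1}$ the decisive observation is that $M_N$ is the top-left $N\times N$ block of $M_{N+1}$, since the entry $\l w_j,v_k\r$ does not depend on $N$. I would embed $\C^N$ into $\C^{N+1}$ by $\vec c\mapsto\hat{\vec c}=(\vec c,0)$, note $|\hat{\vec c}\,|=|\vec c\,|$, and estimate
$$\|M_{N+1}\hat{\vec c}\,\|^2=\sum_{j=1}^{N+1}\Big|\sum_{k=1}^N\l w_j,v_k\r c_k\Big|^2\ge\sum_{j=1}^N\Big|\sum_{k=1}^N\l w_j,v_k\r c_k\Big|^2=\|M_N\vec c\,\|^2,$$
the inequality coming from discarding the $j=N+1$ term. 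Taking the supremum over $|\vec c\,|=1$ and using that $\{\hat{\vec c}:|\vec c\,|=1\}$ lies in the unit sphere of $\C^{N+1}$ yields $\Lambda_{N+1}\ge\Lambda_N$. Since $\tilde B_N=\max\{\Lambda_N,1\}$, this gives $\tilde B_N\le\tilde B_{N+1}$ at once; the base case is $\tilde B_0=1$ (because $\tilde\B_0=\mathcal V$) together with $\tilde B_1=\max\{\|w'_1\|^2,1\}=1$.

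I expect the first assertion to be essentially bookkeeping once $U_N'=M_NM_N^*$ is in place, the only delicate issue being to keep straight which of $M_NM_N^*$, $M_N^*M_N$, and the full frame operator of $\tilde\B_N$ on $H$ carries which eigenvalues, and in particular to account correctly for the orthonormal tail in the upper constant. The monotonicity is the genuinely new content, but the compression estimate above reduces it to the elementary fact that passing to a principal submatrix cannot increase the largest singular value; I do not anticipate any deeper obstacle.
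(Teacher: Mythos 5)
Your proof of the first assertion is the same as the paper's: both rest on the factorization $U_N'=M_NM_N^*$ (which the paper verifies by the same computation $\l w_j',w_i'\r=\sum_k p_{j,k}\overline{p_{i,k}}$) and then read the frame constants off Theorem \ref{T-const-tB}, so that $\tilde A_N=\lambda_N$ and $\tilde B_N=\max\{\Lambda_N,1\}$ become statements about the extreme singular values of $M_N$. Your remark about keeping track of the orthonormal tail and the constant $1$ is well taken; the paper absorbs this into Theorem \ref{T-const-tB} and simply says the first part follows from ``standard linear algebra results.'' Where you genuinely diverge is the monotonicity $\tilde B_N\leq\tilde B_{N+1}$: the paper cites Thompson's interlacing inequalities for singular values of submatrices to get $\sigma_1(M_N)\leq\sigma_1(M_{N+1})$, whereas you prove exactly the special case you need by the variational characterization, padding $\vec c$ with a zero and discarding the extra row, which gives $\|M_{N+1}\hat{\vec c}\,\|\geq\|M_N\vec c\,\|$ and hence $\Lambda_N\leq\Lambda_{N+1}$. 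Your route is self-contained and elementary and avoids the external reference; the paper's citation buys the full interlacing statement, of which only the top singular value inequality is used here. Both are correct. (One cosmetic point, present in the corollary as stated rather than in your argument: consistency with $\tilde B_N=\max\{\Lambda_N,1\}$ and $\tilde A_N=\lambda_N$ requires the frame constants to be the \emph{squares} of the extreme singular values, capped below by $1$ in the upper constant; your write-up handles this correctly by working with $\Lambda_N$ and $\lambda_N$ throughout.)
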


   \begin{proof}
    We can write $w'_j= \sum_{k=1}^N p_{j,k} v_k$, with $\dsize p_{j,k} =\l \omega_j, v_k\r $.
 It is easy to verify that $\dsize U'_N=\{\l  w'_j, \,w'_k\r\}_{1\leq j,k\leq N}=M_N M_N^*$.
   Indeed,
   $$
   \l  w'_j, \,w'_i\r=\l \sum_{k=1}^N p_{j,k}v_k,\ \sum_{k=1}^N p_{i,k}v_k\r= \sum_{k=1}^N p_{j,k}\overline{p_{i,k}}.
   $$
   The   first part of the corollary follows from  Theorem \ref{T-3main} and standard linear algebra results.

    To prove the second part, we observe that $M_N$ is a sub-matrix of $M_{N+1}$. It is well known (see e.g. \cite{Thompson} )
that the eigenvalues and singular values of a non-negative definite matrix interlace with those of a sub-matrix.  If we denote with $\sigma_1(A)$ the largest singular value of a  matrix $A$ and  if we denote with $B$ a sub-matrix of $A$, we have that $\sigma_1(B)\leq \sigma_1(A)$. With the notation of Theorem \ref{T-3main}, we have that  $\tilde B_N=\max\{\Lambda_N, \,1\}\leq \tilde B_{N+1}=\max\{\Lambda_{N+1}, \,1\}$, as required.

   \end{proof}

   \section{Proof of Theorem \ref{T-first-main}}

\begin{proof}[Proof of Theorem \ref{T-first-main}]

 Let $\dsize A=   \liminf_{N\to\infty }A_N  $ and $ B=\dsize \limsup_{N\to\infty }B_N  $. We prove  that    if $  A > 0$ and $   B< \infty  $ then $\B_\infty$  is a  Riesz basis.

We prove first that $\B_\infty $ is a Riesz sequence:
for every finite family $a_1$, ... $a_N \in \C$, with $\sum _{j=1}^N |a_j|^2=1$,  we have
$$
  A_N\leq  || a_1 w_1+...+a_N w_N||^2\leq B_N.
$$
So, for every $N\ge 1$, we also have \begin{equation}\label{e-ineq1}
  A \leq   A_N\leq    || a_1 w_1+...+a_N w_N||^2\leq B_N\leq  B
\end{equation}
as required.

\medskip

Let us prove that  $\B_\infty$ is a frame. By     \cite[Thm. 7.4]{Heil} the right inequality
 in \eqref{e-ineq1}  implies the right-frame inequality:
 \begin{equation}
  \sum_{n=1}^\infty |\l f, w_n\r|^2\leq B  ||f||^2,\label{Bf}
\end{equation}
  for every $f\in H$. We prove that  the left frame inequality holds with constant $A $.

Let   $
 T_N f=(\l f, w_1\r, \, ..., \l f, w_N\r,\, \l f, v_{N+1}\r, \l f, v_{N+2}\r, \,...)
 $ be the   analysis operator  of $\B_N$,
 and let $T_\infty$ be the analysis operator of $\B_\infty$.
We prove that     $T_\infty $  is the pointwise limit of the $T_N$'s, i.e., that $
 \lim_{N\to\infty} ||  T_\infty   f-T_Nf||_{\ell^2}=0$ for every $f\in H$. Indeed,
 $$
 \lim_{N\to\infty} ||  T_\infty   f-T_Nf||_{\ell^2}= \lim_{N\to\infty}\left(\sum_{k=N+1}^\infty  |\l  w_k-v_k,\, f\r|^2\right)^{\frac 12} $$$$\leq \lim_{N\to\infty} \left(\sum_{k=N+1}^\infty  |\l  w_k, \, f\r|^2\right)^{\frac 12}+  \lim_{N\to\infty} \left(\sum_{k=N+1}^\infty  |\l  v_k, \, f\r|^2\right)^{\frac 12}=0
 $$
 because, by \eqref{Bf},  the sums above   are   "tails" of convergent series.
 Thus, for every $f\in H$,
 $$||T_\infty   f||_{\ell^2}^2   =   \lim_{N\to\infty}|| T_Nf ||_{\ell^2}^2.
 $$

Fix $f\in H$, with $||f||=1$.   Since
$$\liminf_{N\to\infty} ||T_N f||_{\ell^2}^2\leq \lim_{N\to\infty} ||T_N f||_{\ell^2}^2 \leq  \limsup_{N\to\infty} ||T_N f||_{\ell^2}^2  $$
and  $A_N\leq ||T_N f||_{\ell^2}^2\leq B_N$, we have that
   $$\liminf_{N\to\infty} A_N \leq \lim_{N\to\infty} ||T_N f||_{\ell^2}^2=||T_\infty f||_{\ell^2}^2  \leq  \limsup_{N\to\infty} B_N  $$
as required.

  \medskip
  We prove the second part of   Theorem\ref{T-first-main}. Assume that $\B_\infty$ is a Riesz basis with    frame constants $A_{\infty},B_{\infty}$.
Recall that  $A_\infty=\inf_{||f||=1} ||T_\infty f||_{\ell^2}^2 $ and $B_\infty=\sup_{||f||=1} ||T_\infty f||_{\ell^2}^2 $.

We  fix $\epsilon>0$ and $ \bar f =\bar f(\epsilon)$ satisfying $||\bar f||=1$,       such that  $ ||T_\infty \bar f||_{\ell^2}^2\ge  B_\infty-\epsilon .$
Since the series  $\sum_{k=1}^\infty  |\l  w_k,\, \, f\r|^2 $ and $\sum_{k=1}^\infty  |\l  v_k, \, f\r|^2 $ converge for every $f\in H$,
we can chose  $\bar N>0$ sufficiently large so that
 $\sum_{j=N+1}^\infty|\l \bar f,\ v_j\r|^2<\epsilon $ and $\sum_{j=N+1}^\infty| \l \bar f,\ w_j\r|^2<\epsilon$ whenever $N>\bar N$. Thus,
$$
B_\infty-\epsilon \leq ||T_\infty \bar f||_{\ell^2} ^2 \leq  \sum_{j=1 }^N |\l \bar f,w_j\r|^2 +\epsilon  \leq   || T_N\bar f||_{\ell^2} ^2 +\epsilon \leq  B_N + \epsilon.
$$
We have proved that  for every $\epsilon>0$ exists $\bar N>0$ such that
$B_\infty\leq  B_N+2\epsilon$ for every $N>\bar N$; thus,
$$ B_\infty\leq \sup _{N>\bar N} B_N+2\epsilon, $$ and
$$B_\infty \leq \lim _{\bar N\to\infty}   \sup _{N>\bar N} B_N=\limsup_{  N\to\infty} B_N.$$
The proof of the other inequality is very similar:  we can find
$ \bar f $  of  unit norm  such that $|| T_\infty\bar f||_{\ell^2} ^2 \leq  A_\infty+\epsilon$;    we can find $\bar N>0$ such that, for every $N>\bar N$,
$$
A_\infty+\epsilon \ge || T_\infty\bar f||_{\ell^2} ^2 \ge \sum_{j=1 }^N |\l \bar f,w_j\r|^2    \ge || T_N\bar f ||_{\ell^2} ^2- \epsilon \ge  A_N - \epsilon.
$$
We obtain  $A_\infty> A_N-2\epsilon$  for every $N>\bar N$; we can argue as in the previous proof to conclude that  $A_\infty \ge \liminf_{N\to\infty} A_N$.

 \end{proof}

  \begin{Rem}\label{Rem-J-example}
  The following example shows that the $ A_N$  need not form a monotonic sequence.
   Let $\{e_j\}_{j\in\N} $ be an orthonormal basis of $H$.  Let $\tau(0)=0$ and $\tau(n)=\sum_{j=1}^n j=\frac n2(n+1)$.
   We define the sequence $\{w_j\}_{j\in\N}$ as follows:   we let
   $$ w_{\tau(n-1)+1}= \frac 1{\sqrt{n}}\sum_{k=1}^n v_{\tau(n-1)+k}, \quad  n\ge 1.$$ When   $j\ne \tau(n-1)+1$ we chose the $w_j$  in such a way that the set \newline$\{w_{\tau(n-1)+1}, \ w_{\tau(n-1)+2},\, ...,\, w_{\tau(n-1)+n}\}$ forms an orthonormal basis of \newline $\mbox{span}\{e_{\tau(n-1)+1}, \ e_{\tau(n-1)+2},\, ...,\, e_{\tau(n-1)+n}\}$.  Thus, $w_j=w'_j$ and
 the $\{w_j'\}_{j\in\N}$ form an orthonormal basis of $H$. We have $ A_\infty= 1$, and
   $$ A_{\tau(n-1)+1}\leq |\l w_{\tau(n-1)+1} , \ e_{\tau(n-1)+1}\r|^2=\frac 1n$$
  which shows that the sequence $ A_n $ is not monotonic. Note that  $\dsize\liminf_{n\to\infty}  A_n=0$.

   \end{Rem}

\subsection{Limit of  frame constants }

Theorem \ref{T-first-main} shows that
$\B_\infty$ is a Riesz basis of $H$ whenever  $\liminf_{N\to\infty }A_N>0$ and  $ \limsup_{N\to\infty }B_N <\infty$.  The example in Remark \eqref{Rem-J-example} shows that we can have $A_\infty>0$ but $\liminf_{N\to\infty }A_N=0$.  If would be desirable to have  $ A_\infty= \liminf_{N\to\infty }A_N$ and $   B_\infty= \limsup_{N\to\infty }B_N$, but for that we need more stringent conditions on the $w_j$.


  We denote with   $\dsize |||L||| =  \sup_{||f||=1} ||L f ||_{\ell^2} $ the operator norm of  a linear operator $L:H\to \ell^2$.

The proof  of Theorem \ref{T-first-main} shows that if $\B_\infty$ is a Bessel sequence, i.e., if
$\sum_{j=1 }^\infty |\l \bar f,w_j\r|^2  <\infty$ for every $f\in H$, then
  $T_N f\to T_\infty  f$  for every $f\in  H$.
  Assume that  $T_N\to T_\infty$ uniformly, i.e. that
 for every  $\epsilon >0$ there exists $\bar N>0$ that depends only on $\epsilon $,   for which    $|| T_Nf-T_\infty f  ||<\epsilon$   whenever $||f||=1$; from
   $
||T_N f|| - \epsilon <  ||T_\infty  f||  < ||T_N f|| + \epsilon,
  $
we can see at once that  $$
 \sup_{||f||=1} ||T_N f|| - \epsilon <  \sup_{||f||=1} ||T_\infty  f||  < \sup_{||f||=1}||T_N f|| + \epsilon;
  $$
  $$
 \inf_{||f||=1} ||T_N f|| - \epsilon <  \inf_{||f||=1} ||T_\infty  f||  < \inf_{||f||=1}||T_N f|| + \epsilon,
  $$
  and so   $\lim_{N\to\infty} B_N=B_\infty$  and  $\lim_{N\to\infty} A_N=A_\infty$. This observation proves the following

    \begin{Thm}\label{L-limit-TN} If $\B_\infty$ is  a Riesz basis   and if
  $ T_N\to T_\infty  $ uniformly, then   $\dsize\lim_{N \to \infty} B_N=B_\infty$ and $\dsize \lim_{N\to \infty} A_N=A_\infty$.
 \end{Thm}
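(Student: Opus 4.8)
The plan is to reduce the claim to the variational characterization of the optimal frame constants and then to exploit only the uniformity of the convergence $T_N\to T_\infty$. Recall from the proof of Theorem \ref{T-first-main} that, since $\B_N$ and $\B_\infty$ are Riesz bases, the optimal frame constants are realized as
$$B_N=\sup_{\|f\|=1}\|T_Nf\|_{\ell^2}^2,\quad A_N=\inf_{\|f\|=1}\|T_Nf\|_{\ell^2}^2,$$
and likewise $B_\infty=\sup_{\|f\|=1}\|T_\infty f\|_{\ell^2}^2$ and $A_\infty=\inf_{\|f\|=1}\|T_\infty f\|_{\ell^2}^2$. Since $t\mapsto t^2$ is continuous, it therefore suffices to prove that $\sup_{\|f\|=1}\|T_Nf\|_{\ell^2}\to\sup_{\|f\|=1}\|T_\infty f\|_{\ell^2}$ and $\inf_{\|f\|=1}\|T_Nf\|_{\ell^2}\to\inf_{\|f\|=1}\|T_\infty f\|_{\ell^2}$.

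First I would fix $\epsilon>0$ and invoke the hypothesis $|||T_N-T_\infty|||\to 0$: there is $\bar N$, depending on $\epsilon$ alone, so that $\|T_Nf-T_\infty f\|_{\ell^2}<\epsilon$ for all $N>\bar N$ and all $f$ with $\|f\|=1$. The reverse triangle inequality then gives, uniformly over such $f$,
$$\|T_Nf\|_{\ell^2}-\epsilon<\|T_\infty f\|_{\ell^2}<\|T_Nf\|_{\ell^2}+\epsilon.$$
The essential feature is that $\bar N$ does not depend on $f$, so these bounds survive passage to the supremum (and to the infimum) over the unit sphere. Taking the supremum of the right inequality yields $\sup\|T_\infty f\|_{\ell^2}\le\sup\|T_Nf\|_{\ell^2}+\epsilon$, while rewriting the left inequality as $\|T_Nf\|_{\ell^2}<\|T_\infty f\|_{\ell^2}+\epsilon\le\sup\|T_\infty f\|_{\ell^2}+\epsilon$ and taking the supremum gives $\sup\|T_Nf\|_{\ell^2}\le\sup\|T_\infty f\|_{\ell^2}+\epsilon$; hence the two suprema differ by at most $\epsilon$. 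The identical manipulation with infima in place of suprema shows that the two infima differ by at most $\epsilon$ as well.

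Letting $N\to\infty$ and then $\epsilon\to0$, I would conclude that $\sup_{\|f\|=1}\|T_Nf\|_{\ell^2}\to\sup_{\|f\|=1}\|T_\infty f\|_{\ell^2}$ and $\inf_{\|f\|=1}\|T_Nf\|_{\ell^2}\to\inf_{\|f\|=1}\|T_\infty f\|_{\ell^2}$; squaring then gives $B_N\to B_\infty$ and $A_N\to A_\infty$. There is no serious obstacle in this argument: it is entirely soft, and all of its content sits in the hypothesis of uniform operator convergence, which is exactly what licenses the interchange of the limit in $N$ with the sup and inf over the unit sphere. The only points demanding a moment's attention are that $A_N,B_N$ are the \emph{squares} of the relevant operator norms (so one passes through the continuous squaring map) and that here the extrema are taken over the full unit sphere of $H$, rather than over the finite-dimensional slice $S_H\cap\hat H_N$ used in Lemma \ref{L-prelim-reduction}.
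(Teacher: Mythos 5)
Your argument is correct and is essentially identical to the paper's: the paper also derives $\bigl|\,\|T_Nf\|_{\ell^2}-\|T_\infty f\|_{\ell^2}\bigr|<\epsilon$ uniformly on the unit sphere from $|||T_N-T_\infty|||<\epsilon$ and then passes to the supremum and infimum to conclude $A_N\to A_\infty$ and $B_N\to B_\infty$. Your added remark about passing through the squaring map is a minor point the paper leaves implicit, but the substance of the two proofs is the same.
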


    A necessary condition for  the uniform convergence of the $T_N $'s is given by the following
\begin{Lemma}\label{L:-Nec-unif-convergence}

a) If $ T_N\to T_\infty  $ uniformly, then $\dsize\lim_{N\to\infty}  ||v_N-w_N||= 0$.

b) If $\{ ||v_n-w_n||\}_{n\in\N} \in \ell^2$, then $ T_N\to T_\infty  $ uniformly.
 \end{Lemma}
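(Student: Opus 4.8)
The plan is to reduce both statements to a single explicit formula for the difference operator $T_N-T_\infty$, after which each direction becomes a one-line estimate. For any $f\in H$, the first $N$ coordinates of $T_Nf$ and $T_\infty f$ agree, while for $k\ge N+1$ the $k$-th coordinate of $T_Nf-T_\infty f$ equals $\l f,\, v_k\r-\l f,\, w_k\r=\l f,\, v_k-w_k\r$. Hence
$$
|| T_Nf-T_\infty f ||_{\ell^2}^2=\sum_{k=N+1}^\infty |\l f,\, v_k-w_k\r|^2 ,
$$
which is exactly the quantity already appearing in the proof of Theorem \ref{T-first-main}. Everything follows from this identity together with the definition $|||T_N-T_\infty|||=\sup_{||f||=1}|| T_Nf-T_\infty f ||_{\ell^2}$.

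For part b) I would bound each summand by Cauchy--Schwarz, $|\l f,\, v_k-w_k\r|\le ||f||\,||v_k-w_k||$, and take the supremum over the unit sphere, obtaining
$$
|||T_N-T_\infty|||^2\le \sum_{k=N+1}^\infty ||v_k-w_k||^2 .
$$
If $\{||v_n-w_n||\}_{n\in\N}\in\ell^2$, the right-hand side is the tail of a convergent series, so it tends to $0$ as $N\to\infty$. Since the bound is uniform in $f$, this is precisely the uniform convergence $T_N\to T_\infty$, proving b).

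For part a) I would produce a matching lower bound by testing $|||T_N-T_\infty|||$ against a single rank-one direction that isolates one coordinate of the difference. Assuming $v_{N+1}\ne w_{N+1}$ (the equality case being trivial since then $||v_{N+1}-w_{N+1}||=0$), I set $f=(v_{N+1}-w_{N+1})/||v_{N+1}-w_{N+1}||$, a unit vector for which the $k=N+1$ term alone contributes $||v_{N+1}-w_{N+1}||^2$ to the sum. Discarding the remaining nonnegative terms gives
$$
|||T_N-T_\infty|||^2\ge ||v_{N+1}-w_{N+1}||^2 .
$$
Uniform convergence forces the left-hand side to $0$, whence $||v_{N+1}-w_{N+1}||\to 0$, which is the assertion after a harmless shift of index.

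Neither direction presents a genuine obstacle once the difference identity is recorded; the only step requiring a moment's attention is the choice of test vector in part a), whose purpose is to extract a single coordinate and thereby convert the operator-norm hypothesis into control on the individual differences $||v_N-w_N||$. I would also note in passing that b) is a strictly stronger hypothesis than the necessary condition in a): summability of $\{||v_n-w_n||\}$ implies it tends to $0$, but not conversely, so the lemma does not claim an equivalence.
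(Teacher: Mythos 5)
Your proposal is correct and follows essentially the same route as the paper: both rest on the identity $||(T_N-T_\infty)f||_{\ell^2}^2=\sum_{k=N+1}^\infty|\l f,\,v_k-w_k\r|^2$, with part a) obtained by testing against the normalized difference vector (the paper phrases this as $||v_m-w_m||=\sup_{||f||=1}|\l f,\,v_m-w_m\r|$, which is the same device) and part b) by the termwise Cauchy--Schwarz bound giving the tail of a convergent series. No gaps.
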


  \begin{proof}
Let $\epsilon >0$ and let $\ov N>0$ such that    $ ||| T_N-T_\infty|||<\epsilon$  for every $N>\bar N$;  then, for every $||f||=1$,
\begin{equation}
||(T_\infty-T_N)f||^2=\sum_{N+1}^{\infty}|\l f,v_j-w_j\r|^2 <\epsilon^2. \label{TNT}
\end{equation}
 Hence, for all $m>N> \bar N$,
$$||v_m-w_m||=\sup_{||f||=1} |\l f,\,v_m-w_m\r| < \epsilon.$$
This proves part a). Part b) follows easily from \eqref{TNT}.

 \end{proof}

 \medskip

   \section{Applications}
       Let $D$ be a bounded and measurable set of $\R^d$, { with $|D|<\infty$. We denote with   $\chi_D$   the characteristic function of  $D$.}
   In this section we find sufficient conditions that ensure that a set of exponentials $\B=\{e^{2\pi i \lambda_n\cdot x}\}_{n\in\N}$ is a Riesz basis of $L^2(D)$.  We denote  $e^{2\pi i \lambda_n\cdot x}$  by $e(\lambda_n)$
  for simplicity.
Let  ${\cal V}=\{v_j(x)\}_{j\in\N}$ be an orthonormal basis of $L^2(D)$.

We denote with $\hat f(y)=\int_{\R^n} f(x) e^{-2\pi i x\cdot y} dx $  the Fourier transform of $f\in L^2(\R^n)$.
We prove the following
\begin{Thm}\label{T-suff-basis}
Assume that there exist constants $a'>0,\ 0<a<1$,  and  $0<\delta<1$  such that, for  every $n\in\N$ and every $N\ge 1$, we have that
\begin{equation}\label{e-a} a\leq  |\widehat {\chi_D v_h}(\lambda_h)|\leq a';\quad  \sum_{j=1\atop{j\ne k}}^N|\widehat {\chi_D v_k}(\lambda_j)| + \sum_{j=1\atop{j\ne k}}^N |\widehat {\chi_D v_j}(\lambda_k)|
  \leq  2a(1-\delta)
 \end{equation}

 Then $ \B=\{e^{2\pi i \lambda_n\cdot x}\}_{n\in\N}$  is a Riesz basis of $L^2(D)$
   with frame constants
  $A_\infty \ge a\delta$, $B_\infty \leq a'+a(1-\delta)$.
\end{Thm}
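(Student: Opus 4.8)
The plan is to view $\B=\{e(\lambda_n)\}$ as the family $\B_\infty$ generated from the orthonormal basis $\cal V=\{v_j\}$ by the replacement vectors $w_j=e(\lambda_j)$, and to reach it through the finite replacements $\B_N$ together with Theorem \ref{T-first-main}. The first step is to translate \eqref{e-a} into the matrix language of Sections 2 and 3. Since
$$
\l v_h,\, e(\lambda_j)\r=\int_D v_h(x)\,e^{-2\pi i\lambda_j\cdot x}\,dx=\widehat{\chi_D v_h}(\lambda_j),
$$
the cross-Gram matrix $M_N=\{\l e(\lambda_j),\, v_k\r\}_{1\le j,k\le N}$ has entries $\ov{\widehat{\chi_D v_k}(\lambda_j)}$; by the first inequality in \eqref{e-a} its diagonal entries have modulus in $[a,a']$, and by the second, for each index the combined modulus of the off-diagonal entries in its row and in its column is at most $2a(1-\delta)$. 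Thus \eqref{e-a} is precisely a balanced row-and-column diagonal-dominance condition on $M_N$, imposed uniformly in $N$. (The $e(\lambda_j)$ are not unit vectors, but $\|e(\lambda_j)\|^2=|D|$ is bounded uniformly in $j$, so the Bessel-type upper bounds used in Sections 2--4 remain valid and the cited results apply.)

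The lower estimate is the cleanest part and does double duty. After left-multiplying $M_N$ by the diagonal unitary that rotates each diagonal entry to its modulus $|\widehat{\chi_D v_j}(\lambda_j)|\ge a$ (an operation preserving singular values and invertibility), I would bound, for every unit $\vec c\in\C^N$,
$$
\Re\,\l M_N\vec c,\,\vec c\,\r\ \ge\ a\,|\vec c\,|^2-\sum_{j\ne k}|\widehat{\chi_D v_k}(\lambda_j)|\,|c_j|\,|c_k|\ \ge\ a-a(1-\delta)=a\delta,
$$
where the symmetrization $|c_j|\,|c_k|\le\tfrac12(|c_j|^2+|c_k|^2)$ reorganizes the double sum so that each index $m$ is weighted by $|c_m|^2$ times exactly the row-plus-column quantity bounded by $2a(1-\delta)$ in \eqref{e-a}. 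For unit $\vec c$ this gives $|M_N\vec c\,|\ge\Re\,\l M_N\vec c,\,\vec c\,\r\ge a\delta$, so $M_N$ is invertible; hence, by Theorem \ref{T-2main}, $\B_N$ is a Riesz basis, and its smallest singular value is at least $a\delta$. Through the variational characterization of the frame constants in Theorem \ref{T-const-B} (equivalently, Corollary \ref{C-new1}) this translates into $A_N\ge a\delta$.

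The step I expect to be the main obstacle is the matching upper bound $B_N\le a'+a(1-\delta)$. The same symmetrization applied to the real part gives $\Re\,\l M_N\vec c,\,\vec c\,\r\le a'+a(1-\delta)$ for unit $\vec c$, i.e. a bound on the \emph{numerical radius} of $M_N$; but $M_N$ is not Hermitian, and its largest singular value $\sigma_{\max}(M_N)=\|M_N\|$ may exceed its numerical radius. The crude operator-norm substitutes — the Schur test $\|M_N\|\le\sqrt{\|M_N\|_1\,\|M_N\|_\infty}$, or bounding the maximal row and column sums separately — only return the off-diagonal defect $2a(1-\delta)$, hence $B_N\le a'+2a(1-\delta)$. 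Closing the gap to the sharp $a'+a(1-\delta)$ is the crux: it is here that the balanced row-and-column form of \eqref{e-a}, together with the near-Hermitian structure of the exponential Gram data $\{\widehat{\chi_D v_k}(\lambda_j)\}$, must be exploited rather than a generic diagonal-dominance bound.

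Finally, the bounds $A_N\ge a\delta>0$ and $B_N\le a'+a(1-\delta)<\infty$ are uniform in $N$, so $\liminf_{N}A_N\ge a\delta$ and $\limsup_{N}B_N\le a'+a(1-\delta)$. Theorem \ref{T-first-main}a) then shows that $\B=\B_\infty$ is a Riesz basis of $L^2(D)$ whose frame constants satisfy $A_\infty\ge a\delta$ and $B_\infty\le a'+a(1-\delta)$, as claimed.
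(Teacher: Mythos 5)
Your overall strategy is the same as the paper's: reduce to the cross-Gram matrix $M_N=\{\l e(\lambda_j),\,v_k\r\}_{1\le j,k\le N}$, bound its extreme singular values uniformly in $N$ via Corollary \ref{C-new1}, and pass to the limit with Theorem \ref{T-first-main}. Your lower bound is complete and correct; in fact your diagonal-unitary-plus-symmetrization argument is a self-contained proof of part b) of Theorem \ref{T-Qi} in the Appendix (Johnson's bound $\sigma_{\min}\ge\min_k\{|a_{kk}|-\tfrac12(R_k+C_k)\}$), which the paper simply cites.

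The genuine gap is the one you flagged yourself: the upper bound. The tool you are missing is part a) of Theorem \ref{T-Qi}: every singular value of $M_N$ lies in one of the intervals $[(|m_{kk}|-s_k)_+,\,|m_{kk}|+s_k]$ with $s_k=\max\{R_k,C_k\}$. This is not a numerical-radius statement; it follows from applying Gershgorin's theorem to the Hermitian dilation $\left(\begin{smallmatrix}0&M_N\\ M_N^*&0\end{smallmatrix}\right)$, whose eigenvalues are $\pm$ the singular values of $M_N$ and whose deleted absolute row sums are exactly the $R_k$ and the $C_k$; so it does control $\sigma_{\max}(M_N)$ and not merely $\sup_{|\vec c|=1}\Re\l M_N\vec c,\,\vec c\r$. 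That is the idea the paper uses to close the estimate, and citing it finishes the proof along the paper's lines. That said, your skepticism about the constant is not unfounded: since \eqref{e-a} only bounds $R_k+C_k$ by $2a(1-\delta)$, one a priori has $s_k=\max\{R_k,C_k\}\le 2a(1-\delta)$, and Theorem \ref{T-Qi} a) then gives $B_N\le a'+2a(1-\delta)$ --- exactly your Schur-test bound. The sharper $B_\infty\le a'+a(1-\delta)$ follows from Theorem \ref{T-Qi} a) only if each of the two sums in \eqref{e-a} is separately at most $a(1-\delta)$; no upper bound of the form $|a_{kk}|+\tfrac12(R_k+C_k)$ can hold for general matrices, as the example $\left(\begin{smallmatrix}0&1\\0&0\end{smallmatrix}\right)$ (largest singular value $1$, while $\tfrac12(R_k+C_k)=\tfrac12$ for each $k$) shows. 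So: adopt Theorem \ref{T-Qi} a) to obtain a uniform upper bound and complete the argument, and note that the constant $a'+a(1-\delta)$ as stated requires the row and column sums to be controlled individually rather than only in aggregate.
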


 \begin{proof}
As in the first sections of this paper, we let $\B_0={\cal V} $, {$\B_\infty=\{e(\lambda_n)\}_{n\in\N}$} and   $\B_N
=\{e(\lambda_1),...,\, e(\lambda_N),\, v_{N+1}(x),\, ...\}$. We also let $\tilde \B_N
=\{p_N(e(\lambda_1)),...,\, p_N(e(\lambda_N)),\, v_{N+1}(x),\, ...\}$ where $p_N$ is  the projection over $H_N= span\{v_1, ...,\, v_N\}$.
By Theorem \ref{T-2main}, $\B_N$ is a Riesz basis of $L^2(D)$   if and only if $\tilde \B_N$ is a Riesz basis. By Theorem \ref{T-first-main} and Corollary \ref{C-new1}, if all the $\tilde \B_N$ are Riesz bases with frame constants    $\tilde A_N$ and $\tilde  B_N$, and if $\liminf_{N\to\infty} \tilde A_N>0$ and $\lim_{N\to\infty} \tilde B_N<\infty$, then also $\B_\infty=\B$ is a Riesz basis with frame constants $A_\infty \ge \liminf_{N\to\infty} \tilde A_N$, $B_\infty \leq \lim_{N\to\infty} B_N$.
Corollary \ref{C-new1} shows that  the frame constants of the  $\tilde \B_N$ are the maximum and minimum  singular values of  the matrix $M_N$ whose elements are
$$
m_{j,k}=\l e(\lambda_j), v_k\r= \widehat{\chi_D v_k}(\lambda_j), \quad 1\leq j,\, k\leq N.
$$
 By Theorem   \ref{T-Qi} in Appendix 2,
 $$A_N\ge a-a(1-\delta)=a\delta, \quad B_N\leq a'+ a(1-\delta)
 $$ for every $N\ge 1$, and so the same estimate hold also for $\tilde A_\infty$ and $\tilde B_\infty$. The  proof of Theorem \ref{T-suff-basis} is concluded.
   \end{proof}

  \begin{Rem} The constants $a$  and $\delta$ in Theorem \ref{T-suff-basis} cannot be completely arbitrary. When  $|D|=1$,
   by Plancherel theorem  we have that
  $$
  1=||e(\lambda_k)||_{L^2(D)}^2 =\sum_{j=1}^\infty|\l v_j, \, e(\lambda_k)\r_{L^2(D)}|^2
  =
  |\widehat {\chi_D v_k}(\lambda_k)|^2+ \sum_{j=1\atop{j\ne k}}^\infty |\widehat {\chi_D v_j}(\lambda_k)|^2.$$
  Since each term in the sums above  is   $\leq 1$,  we have that
  $$
  \sum_{j=1\atop{j\ne k}}^\infty |\widehat {\chi_D v_j}(\lambda_k)|^2\leq   \sum_{j=1\atop{j\ne k}}^\infty |\widehat {\chi_D v_j}(\lambda_k)| \leq
   2a(1-\delta).
  $$
  Thus,
  $ \dsize
  |\widehat {\chi_D v_k}(\lambda_k)|^2 \ge 1-2a(1-\delta).
  $
  If $1-2a(1-\delta) \ge a^2 , $ i.e. if $$0< a\leq \sqrt{1+(1-\delta)^2  }-(1-\delta), $$we    have
  $ |\widehat {\chi_D v_k}(\lambda_k)| \ge a $.

  \end{Rem}
  \section{Appendix}

\subsection{Riesz sequences, frames and bases}
 We have used the excellent textbooks \cite{Heil}  and \cite{Cr} for most of the definitions  and preliminary results presented in this section.

 Let $H$ be a  separable Hilbert space  with inner product $\langle\ ,\ \rangle $  and norm $||\ ||=\sqrt{\l \ , \    \r} $.
A sequence of vectors ${\mathcal V}= \{v_j\}_{j\in\Z} \subset H $   is a
 {\it frame} if
there exist  constants $0< A, \ B<\infty$    such that the following inequality holds  for every $w\in H$.
\begin{equation}\label{e2-frame}
 A||w||^2\leq  \sum_{j\in\Z} |\l  w, v_j\r |^2\leq B ||w||^2.
\end{equation}

  We say that  ${\cal V}$  is a   {\it  tight frame } if $A=B$   and   is a {\it Parseval frame} if $A=B=1$.


The left inequality in \eqref{e2-frame} implies that  ${\cal V}$ is     complete    in $H$  but it may not be linearly independent: we say that
${\mathcal V}$ is a  {\it Riesz sequence}  if   there exists constants $0<A\leq B <\infty$ such that, for every finite set of coefficients $ \{a_j\}_{j\in J}\subset\C $,
we have that
\begin{equation}\label{e2- Riesz-sequence}
 A  \sum_{j\in J}   |a_j|^2   \leq  \left\Vert \sum_{j\in J}  a_j  v_j \right\Vert^2  \leq B \sum_{j\in J} |a_j|^2,
\end{equation}

A {\it Riesz basis} is a  frame and a Riesz sequence, i.e. it is a set ${\cal V}$ that satisfies \eqref{e2- Riesz-sequence}  and \eqref{e2-frame}. It is proved in \cite[Prop. 3.5.5]{Cr}  that the  constants $A$ and $B$ in \eqref{e2-frame} and \eqref{e2- Riesz-sequence}  are the same.

Equivalently, a  Riesz basis is a bounded unconditional Schauder basis. Thus, every element in $H$ has a unique representation as a linear combination of elements of  ${\cal V}$.

 An orthonormal  basis   is a Riesz basis; we can write   $w=\sum_{j\in\Z} \l v_j,  \, w\r v_j$ for every $v\in H$ and   this  representation formula  yields  the following important identities: for every $    w,\ z\in  H$,
\begin{equation}\label{e-Planch}
||w||^2= \sum_{n\in\Z } |\l v_n,\, w\r|^2, \quad \l w, z\r=   \sum_{n\in\Z }  \l v_n,\, w\r\overline{\l z,v_n\r}.
\end{equation}
 We recall the following useful
 \begin{Prop}\label{prop-C}   \cite[Prop. 3.2.8]{Cr}  A sequence of unit vectors in $H$  is a Parseval frame if
and only if it is an orthonormal Riesz basis.
\end{Prop}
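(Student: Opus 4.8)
The plan is to prove the two implications separately; the reverse implication is immediate, and essentially all the content lies in showing that a Parseval frame of unit vectors is automatically orthonormal.

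First, suppose $\{v_j\}$ is an orthonormal basis. Then it consists of unit vectors, and the first identity in \eqref{e-Planch} is precisely the Parseval frame inequality \eqref{e2-frame} with $A=B=1$. So $\{v_j\}$ is a Parseval frame of unit vectors, and this direction requires no further work.

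For the converse, suppose $\{v_j\}$ is a Parseval frame consisting of unit vectors, so that $\sum_j |\l v_j, w\r|^2 = \|w\|^2$ for every $w \in H$. The key step is to feed each frame vector back into this identity: taking $w = v_k$ and using $\|v_k\|=1$ together with $\l v_k, v_k\r = \|v_k\|^2 = 1$, we obtain
\begin{equation*}
1 = \|v_k\|^2 = \sum_j |\l v_j, v_k\r|^2 = |\l v_k, v_k\r|^2 + \sum_{j\ne k} |\l v_j, v_k\r|^2 = 1 + \sum_{j\ne k}|\l v_j, v_k\r|^2 .
\end{equation*}
Hence $\sum_{j \ne k}|\l v_j, v_k\r|^2 = 0$, which forces $\l v_j, v_k\r = 0$ for all $j \ne k$. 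Thus the $v_j$ are pairwise orthogonal unit vectors, i.e. an orthonormal system.

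It remains to observe that this orthonormal system is complete, hence an orthonormal basis (and in particular a Riesz basis, by the discussion following \eqref{e2- Riesz-sequence}). Completeness follows from the lower frame bound $A = 1 > 0$: if $w$ is orthogonal to every $v_j$, then $\|w\|^2 = \sum_j |\l v_j, w\r|^2 = 0$, so $w = 0$. I expect the only step that requires any thought is the self-application $w = v_k$; once that substitution is made, the unit-norm hypothesis collapses all the off-diagonal terms automatically, so there is no genuine obstacle in the argument.
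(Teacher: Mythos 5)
Your proof is correct, and since the paper simply cites this result from \cite[Prop.\ 3.2.8]{Cr} without proof, your argument is exactly the standard one from that reference: substitute $w=v_k$ into the Parseval identity so the unit-norm hypothesis forces the off-diagonal terms to vanish, then get completeness from the lower frame bound. Nothing to add.
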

%
%

 \subsection{Gershgorin  theorem for  singular values }
Consider the complex matrix  $A=\{a_{i,j}\}_{1\leq i,j\leq n}$;
Let $\dsize R_i= \sum_{j=1\atop{j\ne i}}^n |a_{i,j}|$ and   $\dsize C_i=   \sum_{j=1\atop{j\ne i}}^n |a_{j,i}|$, and  $s_j=\max\{C_j, R_j\}$.
%

 Gershgorin's theorem   provides a powerful tool for estimating the eigenvalues of  complex-valued matrices.
%
It states that each eigenvalue of a square matrix  ${  M}=\{m_{i,j}\}_{1\leq i,j\leq n}$ is in at least one of the disks
$ D_j=\{z\in\C \, :\, |z-m_{j,j}|\leq R_j\},
$
and in at least one of the disks
$ D'_j=\{z\in\C \, :\, |z-m_{j,j}|\leq C_j\}.
$
See \cite{G}, and also   \cite[pg. 146]{MM} and \cite{BM}.

Observe that if    $|m_{j,j}|> s_j $ for every $j$, (i.e., if $M$ is {\it diagonally dominant}),   then $M$ is nonsingular.
  Gershgorin-type theorems for singular values are proved in \cite{Johnson, Qi}.
The   theorem below is stated for $n\times n$ matrix but it is also valid, with a slightly different statement, also for general matrices.

   \begin{Thm}\label{T-Qi}

 a) Each singular value of $A$ lies in one of the real intervals
 $$B_i= [(|a_{i,i}|-s_i)_+, \ |a_{i,i}|+s_i ], \quad 1\leq i\leq n $$
 where as usual $x_+=\max\{x, 0\}$.

 b) Let $\sigma_n(A)$ be the minimum eigenvalue of $A$. Then
 $$\sigma_n(A)\ge \min_{1\leq k\leq n} \{ |a_{k,k}|-\frac 12(R_k+C_k)\}.$$

 \end{Thm}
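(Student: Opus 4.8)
The plan is to prove the two parts separately: part (a) by a direct manipulation of the singular‑value equations, and part (b) by comparing $A$ with its Hermitian part and then invoking the ordinary Gershgorin bound of the preceding subsection.

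For part (a) I would use that every singular value $\sigma$ of $A$ admits unit right and left singular vectors $x,y\in\C^n$ with $Ax=\sigma y$ and $A^*y=\sigma x$. Because $A$ and $A^*$ share the same singular values while the roles of $R_i$ and $C_i$ are merely interchanged (so that the family of intervals $B_i$, which depends on $R_i$ and $C_i$ only through $s_i=\max\{R_i,C_i\}$, is unchanged), I may assume without loss of generality that $\max_i|x_i|\ge\max_i|y_i|$. Let $p$ be an index with $|x_p|=\max_i|x_i|>0$. Reading off the $p$-th coordinate of $Ax=\sigma y$ and isolating the diagonal term gives $a_{p,p}x_p=\sigma y_p-\sum_{j\ne p}a_{p,j}x_j$; taking moduli and using $|y_p|\le\max_i|y_i|\le|x_p|$ and $|x_j|\le|x_p|$ yields $|a_{p,p}|\,|x_p|\le\sigma|x_p|+R_p|x_p|$, i.e. $\sigma\ge|a_{p,p}|-R_p$. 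Reading off the $p$-th coordinate of $A^*y=\sigma x$ and using the same two bounds on the entries of $y$ gives $\sigma|x_p|\le|a_{p,p}|\,|x_p|+C_p|x_p|$, i.e. $\sigma\le|a_{p,p}|+C_p$. Since $s_p=\max\{R_p,C_p\}$ and $\sigma\ge0$, these two one‑sided estimates place $\sigma$ in the single interval $B_p=[(|a_{p,p}|-s_p)_+,\,|a_{p,p}|+s_p]$, which is the assertion.

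For part (b) I would first normalize the diagonal: right–multiplying $A$ by a diagonal unitary $D=\mathrm{diag}(e^{i\theta_1},\dots,e^{i\theta_n})$ leaves every modulus $|a_{i,j}|$ (hence $R_i,C_i$) and all singular values unchanged, so I may assume $a_{k,k}=|a_{k,k}|\ge0$ for every $k$. Next I would establish $\sigma_n(A)\ge\lambda_{\min}(H)$ for the Hermitian part $H=\tfrac12(A+A^*)$: if $u^\ast$ is a unit vector realizing $\sigma_n(A)=\min_{\|u\|=1}\|Au\|$, then by Cauchy–Schwarz $\sigma_n(A)=\|Au^\ast\|\ge|\langle Au^\ast,u^\ast\rangle|\ge\Re\langle Au^\ast,u^\ast\rangle=\langle Hu^\ast,u^\ast\rangle\ge\lambda_{\min}(H)$. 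Finally, applying the ordinary (Hermitian) Gershgorin estimate to $H$, whose diagonal entries are now $h_{k,k}=|a_{k,k}|$ and whose off‑diagonal entries satisfy $|h_{k,j}|\le\tfrac12(|a_{k,j}|+|a_{j,k}|)$, gives $\lambda_{\min}(H)\ge\min_k\{\,|a_{k,k}|-\tfrac12(R_k+C_k)\,\}$, and the claimed bound follows.

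The step I expect to require the most care is matching the two one‑sided estimates in (a) to the \emph{same} index $p$: the lower bound comes naturally from the row equation and the upper bound from the column equation, and it is precisely the choice of $p$ as the coordinate maximizing the \emph{larger} (in sup‑norm) of the two singular vectors, combined with the $A\leftrightarrow A^*$ symmetry, that forces both to hold at one index. In part (b) the only subtlety is that a naive application would produce $\Re(a_{k,k})$ in place of $|a_{k,k}|$; the diagonal‑unitary reduction is exactly what upgrades the bound. Since the statement is classical, one could alternatively simply cite \cite{Johnson,Qi}.
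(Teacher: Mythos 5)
Your proof is correct. Note that the paper itself does not prove this theorem at all: it simply cites Theorem 2 of \cite{Qi} for part a) and Theorem 3 of \cite{Johnson} for part b), so there is no in-paper argument to compare against. What you have written is a self-contained reconstruction that follows the standard route of those references: for a) the singular-vector pair $(x,y)$ with $Ax=\sigma y$, $A^*y=\sigma x$, the $A\leftrightarrow A^*$ symmetry to arrange $\max_i|x_i|\ge\max_i|y_i|$, and the row/column equations at the maximizing index $p$ (your care in getting both one-sided bounds at the \emph{same} index $p$ is exactly the crux, and your handling of it is sound); for b) the diagonal-unitary normalization $a_{k,k}=|a_{k,k}|$, the inequality $\sigma_n(A)\ge\lambda_{\min}\bigl(\tfrac12(A+A^*)\bigr)$ via Cauchy--Schwarz, and ordinary Gershgorin applied to the Hermitian part. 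Both arguments check out, including the edge cases ($\sigma=0$ is covered because the full SVD supplies singular vectors for zero singular values, and the bound in b) is vacuous when the right-hand side is negative). The only cosmetic remark is that the theorem's phrase ``minimum eigenvalue of $A$'' in part b) is evidently a typo for ``minimum singular value,'' which is how you (correctly) interpret it.
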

   For Part a) see Theorem 2 in \cite{Qi}, for  part b), see  Theorem 3 in \cite{Johnson}.


\begin{thebibliography}{999}






 \bibitem{AB} Anderson, N.; Best,  G.
{\it A Gerschgorin-Rayleigh inequality for the eigenvalues of Hermitian matrices.}
Linear and Multilinear Algebra 6 (1978/79), no. 3, 219--222.
%
\bibitem{BM}
Brualdi, R.; Mellendorf, S. {\it Regions in the complex plane containing the eigenvalues of a matrix}. Amer. Math. Monthly 101 (1994), no. 10, 975--985.

\bibitem  {Cr}   Christiansen, O.
{\it An Introduction to Frames and Riesz Bases }(second edition) Birkh\"auser, 2016.

\bibitem{Dc} De Carli, L.    {\it Concerning exponential bases on multi-rectangles of $\R^d$}
  (to appear in  the proceeding of the International Conference in Approximation Theory, Savannah, May 2017)


 \bibitem{DMT}    De Carli, L;   Mizrahi, A. and Tepper, A., {\it Three problems on exponential bases},   Canadian Math. Bullettin, DOI 10.4153/CMB-2018-015-6 (2018)


\bibitem{F}  Fuglede, B. {\it Commuting self-adjoint partial differential operators and a group theoretic
problem}, J. Funct. Anal. 16 (1974), 101--121.


\bibitem {G} Gerschgorin, S.  {\it \"Uber die Abgrenzung der Eigenwerte einer Matrix.} Izv. Akad. Nauk. USSR Otd. Fiz.-Mat. Nauk 6, (1931), 749--754.
  %
   \bibitem{GL}  Grepstad, S.;  Lev, N. {\it Multi-tiling and Riesz bases}. Adv. Math. 252 (2014), 1--6.
\bibitem{Heil}  Heil, C. {\it  A basis theory primer}, Appl.  Num. Harm. Analysis,
 Birkh\"auser  (2011).


\bibitem{K}   Kolountzakis, M. {\it The study of translational tiling with Fourier Analysis.}  {\it Fourier Analysis and
Convexity},  131--187. Birkh\"auser, 2004.

 \bibitem{K2}  Kolountzakis, M., {\it Multiple lattice tiles and Riesz bases of exponentials,}
Proc. Amer. Math. Soc. 143 (2015), 741--747.


\bibitem{Johnson} Johnson, C.R. {\it A Gersgorin-type lower bound for the smallest singular value}. Linear Algebra Appl. 112 (1989), 1–-7.


  \bibitem{MM}   Marcus, M.;  Mint, H.{\it A Survey of Matrix Theory and Matrix Inequalities},



 \bibitem{Qi}
Qi, L. Q., {\it
Some simple estimates for singular values of a matrix.}
Linear Algebra Appl. 56 (1984), 105–-119.

\bibitem{Thompson} Thompson, R. C., {\it Principal Submatrices IX: Interlacing Inequalities for Singular
Values of Submatrices } Linear Algebra Appl. 5  (1972), 1--12.

 \bibitem {Y} Young, R. M.  {\it An introduction to nonharmonic Fourier series}. Revised first edition. Academic Press, Inc., San Diego, CA, 2001.



\end{thebibliography}
\end{document}